\documentclass[11pt]{article}

\usepackage{amssymb, amsmath,amsmath,latexsym,amssymb,amsfonts,amsbsy, amsthm,mathtools,graphicx,color,cases,bm}
\usepackage[english]{babel}
\usepackage{indentfirst}
\usepackage{graphicx,float}
\usepackage{xcolor,txfonts}
\numberwithin{equation}{section}

\newtheorem{thm}{Theorem}[section]
\newtheorem{lem}{Lemma}[section]

\newtheorem{rem}{Remark}[section]

\newtheorem{defn}{Definition}[section]
\theoremstyle{definition}
\theoremstyle{remark}

\def\dd{{\rm d}}

\begin{document}
\title{\bf Blow-Up Theory and Liouville-Type Theorem for Solutions of a Class of Generalized Camassa-Holm-Kadomtsev-Petviashvili Equations}
\author{Xueli Ke$^a$\thanks{E-mail: kexueli@hpu.edu.cn},
Jiamin Wang$^b$ \thanks{E-mail: wjm1061088@126.com},
Aibin Zang$^c$ \thanks{E-mail: zangab05@126.com}\thanks{Corresponding author} \\
\textit{\small  a. School of Mathematics and Information Science, Henan Polytechnic University, Jiaozuo, Henan 454003, P. R. China} \\
\textit{\small  b.School of Primary Education, Yichun Early Childhood Teachers' College, Yichun, Jiangxi 336000, P. R. China}\\
\textit{\small  c. School of Artificial Intelligence and Information Engineering, Yichun University, Yichun, Jiangxi 336000, P. R. China}
}

\date{}
\maketitle

\begin{abstract}
	We investigate the blow-up behavior and Liouville-type theorems of solutions to a class of generalized Camassa-Holm-Kadomtsev-Petviashvili (CH-KP) equations with a generally smooth nonlinear term $g(u)$. First, using the continuation method, we  establish a blow-up criterion that is independent of the regularity index of initial data. Under the assumption that  \( g'(u) \) is uniformly bounded, we prove the blow-up theorem and a weighted blow-up result by means of characteristic lines, a priori estimates and the Riccati inequality. Moreover, we extend these blow-up results to the setting where  \( g'(u) \) is polynomially controlled, which includes typical nonlinearities such as \( g(u)=\kappa u+3u^2 \) for the classical CH-KP equations.  Furthermore, a Liouville-type uniqueness theorem is established under the condition $g(u) \geq \gamma u^2$  with $u \neq 0$,  $g(u)>\gamma u^2$.

{\textbf{Keywords:} Generalized Camassa-Holm-Kadomtsev-Petviashvili equation; Blow-up criterion; Liouville-type theorem}

{\textbf{AMS Subject Classification (2020):} 35B35, 35B40, 35Q35, 76D03}
\end{abstract}

\section{Introduction}\label{S0}
The study of water waves has been a popular and fascinating subject in recent years. One of the most celebrated models is the Korteweg-de Vries (KdV) equation\cite{KD1895}, which describes unidirectional, shallow-water waves and is given by
\begin{align}\label{eq:01}
u_t+u_x+\frac{3}{2}uu_x+\frac{1}{6}u_{xxx}=0.
\end{align}
In two dimensions, the KdV equation naturally extends to the Kadomtsev-Petviashvili (KP) equation \cite{KP1970}, which incorporates transverse effects:
\begin{align}\label{eq:02}
(u_t+u_{xxx}+u_x+uu_x)_{x}+u_{yy}=0, t>0, (x,y)\in\mathbb{R}^2,
\end{align}
with initial condition $u|_{t=0}=u_0(x,y),\:(x,y)\in\mathbb{R}^2$. The well-posedness of equation \eqref{eq:02}, which generalizes the KdV equation by considering weak diffraction, has been extensively studied over the past three decades. While the KP equation captures the propagation of weakly dispersive waves in two dimensions \cite{HHK2009,MST2011}, other models are required to describe wave breaking. A prominent example is the Camassa-Holm (CH) equation \cite{CH1993,CHK2005, CL2009,JOHNSON2002}, that is
\begin{align}\label{eq:03}
u_t-u_{xxt}+2\kappa u_x+3uu_x=2u_xu_{xx}+uu_{xxx},
\end{align}
where $\kappa$ is a parameter. The CH equation has garnered significant attention due to its integrable structure and its admission of peakon solutions \cite{CL2009,JOHNSON2002}.

A natural two-dimensional counterpart of the CH equation, analogous to the relationship between the KdV and KP equations, is the Camassa-Holm-Kadomtsev-Petviashvili (CH-KP) equation. As pointed in \cite{GLL2021}, it takes the form
\begin{align}\label{eq:1}
{{\left( {{u}_{t}}-\frac{5}{12}\mu{{u}_{xxt}}+u_x+\frac{3}{2}\varepsilon u{{u}_{x}}-\frac{1}{4}\mu u_{xxx}-
\frac{5}{24}\mu\varepsilon\left( 2{{u}_{x}}{{u}_{xx}}+u{{u}_{xxx}} \right) \right)}_{x}}+\frac{1}{2}\varepsilon^3{{u}_{yy}}=0.
\end{align}
where $u$ represents the horizontal velocity field at height $h_0=\frac{1}{\sqrt{2}}$ in flow domain, $\varepsilon=a/h_0\ll 1$ and $\mu=h_0^2/\lambda^2\ll 1$ are amplitude and shallowness parameters, respectively, where $a$ denotes the small amplitude and $\lambda$ the long wavelength.

Through suitable scaling and Galilean transformations with a parameter $\kappa\in\mathbb{R}$, eqation\eqref{eq:1} can be simplified to the standard form \cite{GLL2021}
\begin{align}\label{CHKP}
{{\left( {{u}_{t}}-u_{xxt}+\kappa u_x+3 u{{u}_{x}}- \left( 2{{u}_{x}}{{u}_{xx}}+u{{u}_{xxx}} \right) \right)}_{x}}+{{u}_{yy}}=0.
\end{align}
This equation was formally derived by Gui \textit{et al.}\cite{GLL2021} from the governing equations for three-dimensional incompressible and irrotational water waves in the long-wave limit.

While significant progress has been made on the local well-posedness, blow-up phenomena, and solitary wave solutions of equation \eqref{CHKP} \cite{CJ2021,GLL2021,MOON2022,NSW2024,PXZ2024}, the question of global existence for arbitrary initial data remains open due to its nonlocal structure and weak linear dispersion.

In this paper, we consider a further generalization of \eqref{CHKP} with a general smooth nonlinear term. By replacing the quadratic nonlinearity $3uu_x$ with a general function $g(u)$, we study the following initial value problem:
\begin{align}\label{GCHKP}
\left( u_t - u_{xxt} + \left( \frac{g(u)}{2} \right)_x - \gamma \left( 2u_x u_{xx} + u u_{xxx} \right) \right)_x + u_{yy} = 0.
\end{align}

Here, $u(t,x,y)$ represents the fluid velocity, $g\in C^{\infty}(\mathbb{R})$, with $g(0)=0$, and $\gamma>0$ is a constant. This formulation allows us to explore the impact of a broader class of nonlinearities on the dynamic behavior of the solution. Note that \eqref{CHKP} is recovered by setting
$g(u)=2\kappa u+3u^2$ and $\gamma=1$. Preliminary investigations into various forms of the CH-KP and generalized CH-KP equations have been reported in \cite{BISWAS2009,EFT2012,KB2011,LDJ2019,LZG2025,Moon2025,WKZ2022} and references therein.

Defining $G(x)=\frac12e^{-|x|},x\in\mathbb{R}$, we have the relation $\left(1-\partial_x^2\right)^{-1}f=G*f$ for all  $f\in L^2(\mathbb{R})$ (see  \cite{GLL2021}). The Cauchy problem for equation \eqref{GCHKP} can then be reformulated as the following system
\begin{align}\label{eq:3}
\begin{cases}u_t+\gamma uu_x+G*\partial_x\left(\frac12\:g(u)+\frac\gamma2\:u_x^2-\frac\gamma2u^2\right)+G*v_y=0,\:(x,y)\in\mathbb{R}^2,\\
u_y=v_x,\\
u|_{t=0}=u_0(x,y),\:(x,y)\in\mathbb{R}^2.
\end{cases}
\end{align}

This paper focuses on the well-posedness of system \eqref{eq:3} within the following Hilbert space.
\begin{defn}
For $s>0$, the Hilbert space $X^{s}(\mathbb R^{2})$ is defined by
$$X^s=X^{s}(\mathbb R^{2}):=\left\{u\in H^{s}(\mathbb R^{2}) | \partial_{x}^{-1}u\in H^{s}(\mathbb R^{2}),\, \partial_{x}u\in H^{s}(\mathbb R^{2})\right\},$$
equipped with  the inner product
$$(u,v)_{X^{s}}:=(u,v)_{H^{s}}+(\partial_{x}^{-1}u,\partial_{x}^{-1}v)_{H^{s}}+(\partial_{x}u,\partial_{x}v)_{H^{s}},\ \forall\ u,\, v\ \in X^{s}(\mathbb R^{2}),$$
and the norm
$$\|u\|_{X^{s}(\mathbb R^{2})}:=(\|u\|_{H^{s}(\mathbb R^{2})}^{2}+\|\partial_{x}^{-1}u\|_{H^{s}(\mathbb R^{2})}^{2}+\|\partial_{x} u\|_{H^{s}(\mathbb R^{2})}^{2})^{\frac{1}{2}}~\mbox{ for}~ u\in X^{s}(\mathbb R^{2}),$$
where $\partial_{x}^{-1}u(x,y):=\mathcal{F}^{-1}((i\xi)^{-1}\mathcal{F}(u)(\xi,\eta))$, and $$H^s(\mathbb{R}^2)=\left\{u\in L^2(\mathbb{R}^2)\left|\int_{\mathbb{R}^2}(1+|\xi|^2+|\eta|^2)^s|\mathcal{F}(u)(\xi,\eta)|^2\dd\xi \dd\eta<+\infty\right.\right\}$$ with the norm $\|u\|^2_{H^s}=\int_{\mathbb{R}^2}(1+|\xi|^2+|\eta|^2)^s|\mathcal{F}(u)(\xi,\eta)|^2\dd\xi \dd\eta,$  $\mathcal{F}$ and $\mathcal{F}^{-1}$ denote the Fourier transform and inverse Fourier transform with respect to the variables $x,\ y$, respectively.
\end{defn}

In the previous work \cite{WKZ2022}, we established the blow-up theory for system \eqref{eq:3} in $X^{\mathrm{s}}$ for specific nonlinear terms such as $g(u)=u^3$ or $u^4$. That analysis,however, relied heavily on the specific structure of these nonlinearities. It is a subtle challenge for a direct generalization to arbitrary smooth functions $g(u)$. The primary goal of this paper is to extend these results to a much wider class of nonlinearities.   We begin to establish a general blow-up criterion under suitable initial conditions. Subsequently, using energy methods, we obtain the corresponding blow-up theorems for the nonlinearities satisfying a polynomial growth condition
\begin{align}\label{cofg}
|g'(\xi)|\le C_1|\xi|^\alpha+C_2,\ \xi\in\mathbb{R},\ \alpha\ge 0.
\end{align}
This generalization is significant as it encompasses many physically relevant models. In particular, it includes the CH-KP equation \eqref{CHKP} (with $g(u)=2\kappa u+3u^2$ and $\gamma=1$) as well as the power-law cases $g(u)=u^3 \mbox{or}~u^4$ studied in \cite{WKZ2022}.

In whole paper, we assume $ g \in C^{\infty}(\mathbb{R})\  \text{and} \ g(0) = 0 $ and establish the following blow-up results.


\begin{thm}\label{T1.1}
Let \(s \geq 2\) and \(u \in C([0,T_{u_0}^*);X^s(\mathbb{R}^2)) \cap C^1([0,T_{u_0}^*);X^{s-2}(\mathbb{R}^2))\) be the unique solution to the system of equations \eqref{eq:3} with initial condition $u_0 \in X^s(\mathbb{R}^2)$. If the maximal existence time $T_{u_0}^* < \infty$, then
\begin{align}\label{eq:4}
\int_0^{T_{u_0}^*}\left\|\nabla u(\tau)\right\|_{L^\infty(\mathbb{R}^2)}^2\mathrm{d}\tau=\infty.
\end{align}
\end{thm}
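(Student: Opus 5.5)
We argue by contradiction. Suppose $T^*:=T_{u_0}^*<\infty$ but
\[
M:=\int_0^{T^*}\|\nabla u(\tau)\|_{L^\infty}^2\,\dd\tau<\infty .
\]
We will show that $\|u(t)\|_{X^s}$ stays bounded on $[0,T^*)$. The local well-posedness theory, with existence time depending only on $\|u_0\|_{X^s}$, then lets us continue the solution past $T^*$, which is a contradiction.

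The first step is an a priori estimate at the level $s=2$, or more generally a low-regularity energy estimate, that is independent of $s$. We work with the quantities $w=\partial_x^{-1}u$, $u$ and $u_x$, and write the equation in the nonlocal form \eqref{eq:3} with $v=\partial_x^{-1}u_y$. Apply $\Lambda^s=(1-\Delta)^{s/2}$ to the equation for $u$, and likewise to its $\partial_x^{-1}$ and $\partial_x$ versions (the latter is obtained by differentiating \eqref{eq:3}). Pair each with $\Lambda^s$ of the corresponding unknown. The quantity to control is
\[
E_s(t)=\|u\|_{X^s}^2 .
\]
The transport term $\gamma uu_x$ is handled by the Kato–Ponce commutator estimate
\[
\|[\Lambda^s,u]\partial_x f\|_{L^2}\lesssim \|\nabla u\|_{L^\infty}\|f\|_{H^s}+\|u\|_{H^s}\|\nabla f\|_{L^\infty},
\]
together with integration by parts, which gives a contribution $\lesssim\|\nabla u\|_{L^\infty}E_s$.

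The nonlocal terms need a different treatment. The operator $G*\partial_x$ is bounded on $H^s$, so we need
\[
\|g(u)\|_{H^s}\le C\bigl(\|u\|_{L^\infty}\bigr)\|u\|_{H^s}
\]
from the Moser composition estimate, using $g(0)=0$. We also need
\[
\|u_x^2\|_{H^s}\lesssim \|u_x\|_{L^\infty}\|u_x\|_{H^s}.
\]
For the term $G*v_y=G*\partial_x^{-1}u_{yy}$, observe that in the full $X^s$ energy the linear term is skew-adjoint. Indeed, $(1-\partial_x^2)^{-1}\partial_x^{-1}\partial_y^2$ is a Fourier multiplier with purely imaginary odd symbol, so it drops out of the energy identity exactly. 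This is why the space includes $\partial_x^{-1}u$.

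Collecting these bounds gives
\[
\frac{\dd}{\dd t}E_s\le C\bigl(\|u\|_{L^\infty}\bigr)\bigl(1+\|\nabla u\|_{L^\infty}+\|\nabla u\|_{L^\infty}^2\bigr)E_s .
\]
The quadratic power on $\|\nabla u\|_{L^\infty}$ enters in two places. One is the $\partial_x^{-1}$ equation, where $\partial_x^{-1}$ of the transport and $u_x^2$ terms must be bounded. The other is where $C(\|u\|_{L^\infty})$ is paid for. This is the reason the criterion is stated with the square.

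The second step removes the dependence on $\|u\|_{L^\infty}$. First we need a bound on $\|u\|_{L^2}$, and if possible on $\|u\|_{H^1_x}$-type quantities, on $[0,T^*)$. Then we interpolate, using for example
\[
\|u\|_{L^\infty}\lesssim \|u\|_{L^2}^{1/2}\|\nabla u\|_{L^\infty}^{1/2}
\]
or a logarithmic Sobolev/Brezis–Wainger type bound. This shows that $\|u\|_{L^\infty}$ is controlled by
\[
1+\int_0^t\|\nabla u\|_{L^\infty}^2\,\dd\tau .
\]
More simply, $\|u(t)\|_{L^\infty}\le\|u_0\|_{L^\infty}+\int_0^t\|\partial_t u\|_{L^\infty}$, and we would try to bound $\|\partial_t u\|_{L^\infty}$ from the equation. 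I expect this step to be the \emph{main obstacle}. A general $g$ has no conservation law, and the nonlocal term $G*v_y$ is not obviously controlled in $L^\infty$ by $\nabla u$. What I would try first is an $L^2$-type energy
\[
\|u\|_{L^2}^2+\|u_x\|_{L^2}^2+\|\partial_x^{-1}u_y\|_{L^2}^2
\]
computed from \eqref{GCHKP}. Its derivative involves only $\int g'(u)u_x\,u$-type terms, which vanish after writing $g'(u)u_xu=\partial_x\Phi(u)$, and cubic terms bounded by $\|\nabla u\|_{L^\infty}$ times the energy. The $X^1$ energy then grows at most like $\exp\bigl(C\int\|\nabla u\|_{L^\infty}\bigr)$. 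Anisotropic Sobolev embedding in the $X^1$-type space controls $\|u\|_{L^\infty}$, so $C(\|u\|_{L^\infty})$ is uniformly bounded on $[0,T^*)$.

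Finally, Gronwall's inequality gives
\[
E_s(t)\le E_s(0)\exp\Bigl(C\int_0^{t}\bigl(1+\|\nabla u\|_{L^\infty}^2\bigr)\,\dd\tau\Bigr)\le E_s(0)\,e^{C(T^*+M)} .
\]
Hence $\sup_{[0,T^*)}\|u\|_{X^s}<\infty$, and the standard continuation argument yields the contradiction. The estimate is uniform in $s$, so the criterion does not depend on the regularity index. This uses the usual fact that the $X^s$ lifespan is the same for all $s\ge2$, since higher norms are controlled by the same Gronwall bound.
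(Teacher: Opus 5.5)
Your overall architecture (contradiction, an a priori $X^s$ bound by energy estimates and Gronwall, then continuation) matches the paper's. However, two steps fail as written, and they sit exactly where the paper does its real work.

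\textbf{The $X^s$ inequality.} The inequality you state is not correct. The $X^s$ norm contains $\|u_x\|_{H^s}$, and it must: $G*\partial_x$ gains a derivative only in $x$, so bounding $\|G*\partial_x(u_x^2)\|_{H^s}$ needs $\|u_x\|_{H^s}$. You therefore have to estimate the $u_x$-equation, whose transport term is $\gamma u u_{xx}$. Take the very Kato--Ponce bound you quote with $f=u_x$: the commutator $[\Lambda^s,u]\partial_x u_x$ costs $\|u\|_{H^s}\|\nabla u_x\|_{L^\infty}$, i.e.\ a second-derivative quantity such as $\|u_{xx}\|_{L^\infty}$.

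That quantity is not controlled by $\|\nabla u\|_{L^\infty}$. For $s=2$ it is not even controlled by $\|u\|_{X^2}$. For $s>2$, bounding it by $\|u\|_{X^s}$ makes the inequality superlinear, so Gronwall does not close. This is the $\|u_{xx}\|_{L^\infty}$ visible in \eqref{eq:33}--\eqref{eq:34}.

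The paper's $s=2$ argument is built to avoid it. Instead of applying $\Lambda^2$ to the $u_x$-equation, it runs a triangular hierarchy:
\begin{itemize}
\item first $\|\nabla u\|_{L^2}^2+\|\nabla u_x\|_{L^2}^2$ via \eqref{eq:44}, where only $\|\nabla u\|_{L^\infty}$ and $\|u\|_{L^\infty}$ appear;
\item then $\|\nabla u_{xx}\|_{L^2}$ via \eqref{eq:45}, where the dangerous term $\int u_{xx}\nabla u_x\cdot\nabla u_{xx}$ is bounded through the anisotropic interpolation \eqref{eq:42} by $\|\nabla u_x\|_{L^2}\|\nabla u_{xx}\|_{L^2}^2$, with $\|\nabla u_x\|_{L^2}$ already controlled;
\item only then $u_{yy}$, $u_{xyy}$ and the full $X^2$ norm via \eqref{eq:53}--\eqref{eq:54}.
\end{itemize}
Each layer is linear once the previous one is known. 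Your proposal has no substitute for this.

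\textbf{The $L^\infty$ control.} Your route to bounding $\|u\|_{L^\infty}$ is circular, and it misplaces the origin of the square. The cancellation $g'(u)u_xu=\partial_x\Phi(u)$ is exactly what gives the conservation law \eqref{eq:27} for $\|u\|_{L^2}^2+\|u_x\|_{L^2}^2$. It does not persist at the $X^1$ level. There you meet terms such as $\frac12\int g(u)\,\partial_x^{-1}u$ (from the $\partial_x^{-1}$-component) and $\frac14\int g''(u)u_xu_y^2$ (from the $\partial_y$-component), as well as explicit factors of $\|u\|_{L^\infty}$. So the $X^1$ inequality again carries $C(g',g'',\|u\|_{L^\infty})$. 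Bounding $\|u\|_{L^\infty}$ by the $X^1$ energy itself makes the inequality superlinear, and the claimed growth $\exp(C\int_0^t\|\nabla u\|_{L^\infty}\,\dd\tau)$ does not follow. Likewise, $\|u\|_{L^\infty}\lesssim\|u\|_{L^2}^{1/2}\|\nabla u\|_{L^\infty}^{1/2}$ is pointwise in time and says nothing in terms of $\int_0^t\|\nabla u\|_{L^\infty}^2\,\dd\tau$.

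The paper instead uses the pointwise bound \eqref{eq:62}. Conservation plus Lemma~\ref{C2.1}(3) give
\[
\|u(t)\|_{L^\infty}\le C(\|u_0\|_{L^2}+\|u_{0x}\|_{L^2})+C\|u_y(t)\|_{L^\infty}.
\]
Inserting this into the factor $(1+\|u\|_{L^\infty})(1+\|\nabla u\|_{L^\infty})$ of \eqref{eq:45} is what produces $1+\|\nabla u\|_{L^\infty}^2$ in the Gronwall exponent. Besides the $g$-constant, the $\partial_x^{-1}$-equation costs only $\|u\|_{L^\infty}+\|u_x\|_{L^\infty}$, since there $\partial_x^{-1}(uu_x)=u^2/2$. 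And if you really had a uniform $L^\infty$ bound, the square would be superfluous.

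Feeding \eqref{eq:62} into your $X^1$ inequality would in fact give a uniform $L^\infty$ bound via Lemma~\ref{C2.1}(1) when $g''$ is bounded, but not for a general smooth $g$. Note that the paper itself handles the remaining $\|u\|_{L^\infty}$-dependence of its constants only by fixing a level $M$ at the outset. So the obstacle you flagged is genuine, but your argument does not remove it.
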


\begin{rem}
The blow-up criterion \eqref{eq:4} indicates that the blow-up time of the solution $T_{u_0}^*$ is independent of the regularity index $s$ of the initial value $u_0$ in the $X^s$ space.
\end{rem}

\begin{thm}\label{T1.2} Assume that the smooth function $g$ satisfies \eqref{cofg} and \(u_0 \in X^s(\mathbb{R}^2)\), where \(s\ge3\). Define $m_0 = \inf\limits_{(x,y)\in\mathbb{R}^2} \partial_x u_0(x,y)>-\infty.$ Suppose there exists a positive constant $K $ depending only on $\gamma$, $g$, and $\|u_0\|_{X^s}$ such that $m_0<-\sqrt{\frac{K}{\gamma}}$. Then there exists a time $T^*$ with
\begin{align*}
0<T^* \leq \frac{1}{2\sqrt{K\gamma}} \ln\left( \frac{\sqrt{\gamma}m_0 - \sqrt{K}}{\sqrt{\gamma}m_0 + \sqrt{K}} \right)<\infty,
\end{align*}
such that for any \(T < T^*\), the system \eqref{eq:3} has a unique solution \(u \in C([0,T];X^s(\mathbb{R}^2)) \cap C^1([0,T];X^{s-2}(\mathbb{R}^2))\), but $u$ satisfies
\begin{align}\label{eq:6}
\limsup_{t\to T^{*^{-}}} \left\|\partial_{x}u(t,\cdot,\cdot)\right\|_{L^{\infty}}=\infty.
\end{align}
\end{thm}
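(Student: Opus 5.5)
We follow the usual characteristic-line and Riccati argument for Camassa--Holm type equations. Differentiating the first equation of \eqref{eq:3} in $x$ and using $\partial_x^2 G*f=G*f-f$ gives, for $m=u_x$,
\begin{align*}
m_t+\gamma u m_x=-\frac{\gamma}{2}m^2+\frac12 g(u)-\frac{\gamma}{2}u^2-G*\Big(\frac12 g(u)+\frac\gamma2 u_x^2-\frac\gamma2 u^2\Big)-\partial_x G*v_y .
\end{align*}
We work with $s\ge3$, so that $u_x\in C^1$ and the characteristic flow $q(t,x,y)$, defined by $\partial_t q=\gamma u(t,q,y)$ with $q(0,x,y)=x$ ($y$ is a parameter), is a $C^1$ diffeomorphism of $\mathbb{R}$ for every $t<T^*$. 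Along characteristics the left-hand side is $\frac{d}{dt}m(t,q(t,x,y),y)$. Hence it suffices to bound the right-hand side, apart from $-\frac\gamma2 m^2$, by a constant $K/2$ in absolute value, with $K$ depending only on $\gamma$, $g$ and $\|u_0\|_{X^s}$.

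\textbf{A priori bounds.} First we derive an energy identity. Multiply \eqref{GCHKP}, written as $u_t-u_{xxt}+\dots+\partial_x^{-1}u_{yy}=0$, by $u$ and integrate. The terms $(g(u)/2)_x u$ and the $\gamma$-terms integrate to zero, and $\int \partial_x^{-1}u_{yy}\,u=-\int\partial_x^{-1}u_y\,u_y=0$. This gives conservation of $E(u)=\int(u^2+u_x^2)\,dx\,dy$, so $\|u\|_{L^2}+\|u_x\|_{L^2}\le C\|u_0\|_{X^s}$.

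The main obstacle is turning this into $L^\infty$ control of $u$, and of the nonlocal terms $G*(\cdot)$ and $\partial_xG*v_y$, using only the initial norm, since we are in 2D. To handle $u$, we would also obtain a conserved or controlled quantity for $\partial_x^{-1}u_y$, namely $v$, from the structure of \eqref{eq:3}. Combined with a one-dimensional Sobolev argument in $x$ (the $H^1_x$ control of $u$) and the mixed-derivative bound, this should give $\|u\|_{L^\infty}\le C\|u_0\|_{X^s}$, in the spirit of the anisotropic embedding used in \cite{WKZ2022}. Once $\|u\|_{L^\infty}\le M$, condition \eqref{cofg} yields $|g(u)|\le (C_1M^\alpha+C_2)|u|$. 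Therefore $|g(u)|\le C(M)|u|$ pointwise, which is bounded in $L^\infty$ and also in $L^2$.

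\textbf{Bounding the nonlocal terms.} Young's inequality in $x$, with $\|G\|_{L^\infty}\le\frac12$ and $\|G\|_{L^1}=1$, controls $G*(\frac12 g(u)-\frac\gamma2 u^2)$ by $C(M)$. The term $G*\frac\gamma2 u_x^2$ is nonnegative, so it only helps the upper estimate; we keep it as is, or bound it by $\frac\gamma4\sup_y\|u_x(\cdot,y)\|_{L^2_x}^2$ via the $H^s$ norm if needed. For the term $\partial_xG*v_y$ we use $\|\partial_x G\|_{L^2}<\infty$ together with $\|v_y\|_{L^\infty_yL^2_x}\lesssim\|\partial_x^{-1}u\|_{H^s}$. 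Here we must check that this stays controlled by the initial data on the existence interval, i.e.\ that it is part of the conserved energy; if it is not, the constant $K$ must absorb it. This step is where we expect the dependence of $K$ on $\|u_0\|_{X^s}$ to come from.

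\textbf{Riccati comparison.} Let $m(t)=\inf_{x,y} u_x(t,x,y)$. By Constantin--Escher type arguments, $m$ is a.e.\ differentiable and satisfies
\begin{align*}
m'(t)\le -\frac\gamma2 m^2+\frac K2 .
\end{align*}
If $m_0<-\sqrt{K/\gamma}$, then $m$ stays below $-\sqrt{K/\gamma}$ and remains decreasing. Separating variables gives
\begin{align*}
\frac{\sqrt\gamma m-\sqrt K}{\sqrt\gamma m+\sqrt K}\le \frac{\sqrt\gamma m_0-\sqrt K}{\sqrt\gamma m_0+\sqrt K}\,e^{-\sqrt{K\gamma}\,t}.
\end{align*}
Consequently $m(t)\to-\infty$ no later than the stated time $T^*$; the exact constant $\frac{1}{2\sqrt{K\gamma}}$ depends on the normalization of $K$. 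Finally, Theorem \ref{T1.1} implies that the solution cannot be continued past this time, and the blow-up mechanism is exactly \eqref{eq:6}.
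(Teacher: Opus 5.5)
\textbf{The proposal has a genuine gap, and it sits exactly at the step you call ``the main obstacle.''} Your skeleton, characteristics plus a Riccati comparison, is the same as the paper's. But bounding the residual by a constant that depends only on the data, for all times up to $T^*$, is the whole difficulty. Neither of your ways of closing it works.

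First, conservation of $\int(u^2+u_x^2)\,\dd x\,\dd y$ gives no $L^\infty$ bound on $u$ in two dimensions, and a bound on $\|v\|_{L^2}$ would not help either. The rescaling $u_\lambda(x,y)=u(\lambda x,\lambda^2y)$ keeps $\|u_\lambda\|_{L^\infty}$ fixed, while $\|u_\lambda\|_{L^2}$, $\|\partial_xu_\lambda\|_{L^2}$ and $\|\partial_x^{-1}\partial_yu_\lambda\|_{L^2}$ all tend to $0$ as $\lambda\to\infty$. The inequalities of Lemma~\ref{C2.1} need $u_y,u_{xy}\in L^2$ or $u_y\in L^\infty$, and none of these is conserved. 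The natural conserved quantity containing $v$ is the Hamiltonian $\int(\frac12\int_0^ug(s)\,\dd s+\frac\gamma2uu_x^2+\frac12v^2)\,\dd x\,\dd y$, and it is not coercive: controlling $\int uu_x^2$ already requires $\|u\|_{L^\infty}$.

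Second, the transverse term $\partial_xG*v_y=G*u_{yy}$ carries two $y$-derivatives. You bound it by $\|\partial_x^{-1}u\|_{H^s}$, which is not conserved. The only available estimate for it, \eqref{eq:31}, has a Gr\"onwall factor containing $\|u_x\|_{L^\infty}$. For $s>3$ one even has $\|u_x\|_{L^\infty}\lesssim\|u_x\|_{H^{s-2}}\le\|\partial_x^{-1}u\|_{H^s}$. So the quantity you want to ``absorb into $K$'' must itself blow up at $T^*$.

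\textbf{The paper's version of this step does not fill the gap.} The paper closes Step~3 exactly by your fallback. It bounds $\|R_1\|_{L^\infty}\le\|F\|_{H^s}$ and $\|R_2\|_{L^\infty}\le C\|u\|_{X^s}$, see \eqref{eq:90}--\eqref{eq:91}, and then inserts $\|u\|_{X^s}\le2\|u_0\|_{X^s}$ from \eqref{eq:25}. But \eqref{eq:25} holds only on the short local interval $[0,T]$, where Lemma~\ref{L2.3} already bounds $\|u_x\|_{L^\infty}$. It therefore gives $|R|\le K$ only where no blow-up can occur, not up to $T^*$ as the Riccati comparison needs, so copying it will not repair your argument.

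\textbf{The constant also does not match.} Your coefficient $-\frac\gamma2m^2$ is what the equation actually gives. It comes from $\partial_x^2G*f=G*f-f$ together with the favorable sign of $-G*\frac\gamma2u_x^2$. With it you get blow-up by $\frac{1}{\sqrt{K\gamma}}\ln\frac{\sqrt\gamma m_0-\sqrt K}{\sqrt\gamma m_0+\sqrt K}$, which is twice the stated bound. This cannot be fixed by renormalizing $K$: as $m_0\to-\infty$ the stated bound behaves like $1/(\gamma|m_0|)$ for every $K$, while yours behaves like $2/(\gamma|m_0|)$. The paper obtains the coefficient $\gamma$ only by leaving all of $\partial_x^2G*F=G*F-F$, including the $\frac\gamma2u_x^2$ inside $F$, in $R_1$ and bounding it by $\|u\|_{X^s}^2$. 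That again presupposes control of $\|u_x\|_{L^\infty}$.

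The remaining steps are fine. This includes the flow $q$, and your use of $\inf_{x,y}u_x$ in place of the paper's single characteristic through a point where $u_{0x}=m_0$.
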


\begin{thm}\label{T1.3} Under assumption of $g$ in Theorem \ref{T1.2} and choose a function \(\varphi \geq 0\) and \(\varphi \in H^2(\mathbb{R})\) such that \(\int_{\mathbb{R}} \varphi \mathrm{d}y = 1\). Let the initial velocity  \(u_0 \in X^s(\mathbb{R}^2)\), \(s>2\), and there exists a point $x_0 \in \mathbb{R}$ satisfying $$\int_{\mathbb{R}} u_{0x}(x_0,y)\varphi(y)\mathrm{d}y < -\sqrt{\frac{2}{\gamma}}C_3,$$  where
\begin{align*}
C_3 = \left\{ C(g, \|u_0\|_{X^s})\|u_0\|_{X^s} + \frac{3}{2}\gamma E(u_0)\|\varphi\|_{L^{\infty}} + E^{\frac{1}{2}}(u_0)\|\varphi''\|_{L^2} \right\}^{1/2},
\end{align*}
here $C(g,\|u_0\|_{X^s})$ is a constant depending only on $g$ and $\|u_0\|_{X^s}$, and $E(u) = \frac{1}{2} \int_{\mathbb{R}} (u^2 + u_x^2) dy$ denotes the energy. Then there exists a time  $T_0$ with
\begin{align*}
0<T_0 \leq \frac{1}{\sqrt{2\gamma} C_3} \ln \frac{\sqrt{\gamma} \int_{\mathbb{R}} u_{0x}(x_0,y)\varphi(y)\mathrm{d}y - \sqrt{2}C_3}{\sqrt{\gamma} \int_{\mathbb{R}} u_{0x}(x_0,y)\varphi(y)\mathrm{d}y + \sqrt{2}C_3} < \infty,
\end{align*}
such that for any $T<T_0$, the  equations \eqref{eq:3} admits a unique solution $u \in C([0,T];X^s(\mathbb{R}^2)) \cap C^1([0,T];X^{s-2}(\mathbb{R}^2))$, but $u$ satisfies
\begin{align}\label{eq:9}
\lim_{t\to T^{-}_0} \int_{\mathbb{R}} \inf_{x\in\mathbb{R}} u_x(t,x,y)\varphi(y)\mathrm{d}y = -\infty.
\end{align}
\end{thm}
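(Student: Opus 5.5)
We prove Theorem \ref{T1.3} with a weighted version of the Riccati argument behind Theorem \ref{T1.2}. We work first with $s\ge3$ and then recover $s>2$ by density together with Theorem \ref{T1.1}, since the blow-up time does not depend on $s$.

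\textbf{Equation for the slope.} Differentiate the first equation of \eqref{eq:3} in $x$ and use $\partial_x^2 G*f=G*f-f$. This gives
\begin{align*}
u_{xt}+\gamma u u_{xx}=-\frac{\gamma}{2}u_x^2+\frac12 g(u)-\frac{\gamma}{2}u^2-G*\Big(\frac12 g(u)+\frac{\gamma}{2}u_x^2-\frac{\gamma}{2}u^2\Big)-\partial_x G*v_y .
\end{align*}
For each fixed $y$, let $m(t,y)=\inf_{x}u_x(t,x,y)$. Following Constantin--Escher, the infimum is attained at some $\xi(t,y)$, and $m(\cdot,y)$ is a.e.\ differentiable with $m_t(t,y)=u_{xt}(t,\xi(t,y),y)$. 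At that point $u_{xx}=0$, so the transport term vanishes. We then set
$$M(t)=\int_{\mathbb{R}} m(t,y)\varphi(y)\,\mathrm{d}y .$$
At $t=0$ we have $M(0)\le\int u_{0x}(x_0,y)\varphi(y)\,\mathrm{d}y$, which is the assumed sufficiently negative value.

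\textbf{Estimating the forcing.} The leading term is handled by Jensen's inequality: since $\varphi\ge0$ and $\int\varphi=1$, we get $-\frac{\gamma}{2}\int m^2\varphi\le-\frac{\gamma}{2}M^2$. The remaining terms are bounded as follows.
\begin{itemize}
\item Using $G\ge0$, drop $-\frac\gamma2 G*u_x^2\le0$.
\item Bound $\frac{\gamma}{2}G*u^2$ and the local $u^2$ term through $\int\|u(\cdot,y)\|_{L^\infty_x}^2\varphi\,\mathrm{d}y\le\|\varphi\|_{L^\infty}\int\|u\|_{H^1_x}^2\,\mathrm{d}y$. This yields the $\frac32\gamma E(u_0)\|\varphi\|_{L^\infty}$ term, since the energy is conserved, or at least controlled.
\item Bound $\frac12 g(u)$ and $G*\frac12 g(u)$ by \eqref{cofg}. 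Since $|g(u)|\le(C_1|u|^\alpha+C_2)|u|$ and $\|u\|_{L^\infty}\lesssim\|u\|_{X^s}$, this gives $C(g,\|u_0\|_{X^s})\|u_0\|_{X^s}$. Here we need $\|u(t)\|_{X^s}$ controlled by the initial data on the relevant interval, via the a priori estimate from Theorem \ref{T1.2}.
\item For the nonlocal term $\partial_xG*v_y$, integrate by parts in $y$ against $\varphi$ twice. With $v=\partial_x^{-1}u_y$, this moves the derivatives onto $\varphi''$. The $x$-convolution with $\partial_x G$ together with $\partial_x^{-1}$ gives $G*u$, so Cauchy--Schwarz yields $E^{1/2}(u_0)\|\varphi''\|_{L^2}$.
\end{itemize}
Together these give $M'(t)\le-\frac{\gamma}{2}M^2+C_3^2$.

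\textbf{Closing the Riccati argument.} Since $M(0)<-\sqrt{2/\gamma}\,C_3$, the comparison solution of the Riccati ODE stays below this threshold, and $M$ decreases. Explicit integration gives
$$\frac{\sqrt\gamma M-\sqrt2C_3}{\sqrt\gamma M+\sqrt2C_3}\ \ge\ \frac{\sqrt\gamma M(0)-\sqrt2C_3}{\sqrt\gamma M(0)+\sqrt2C_3}\,e^{\sqrt{2\gamma}C_3t},$$
so $M\to-\infty$ no later than the stated $T_0$. Since $\int\inf_x u_x\varphi=M$, this is \eqref{eq:9}. It also forces $\|\nabla u\|_{L^\infty}$ to blow up, consistent with Theorem \ref{T1.1}. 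On $[0,T]$ with $T<T_0$ the local solution exists by the local theory.

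\textbf{Main obstacle.} The difficulty is justifying the pointwise-in-$y$ infimum argument with $\xi(t,y)$: we need measurability and differentiation under the $y$-integral. We will handle this by a Rademacher/Lipschitz argument for fixed $y$, followed by Fatou's lemma. The treatment of the nonlocal $v_y$ term via integration by parts in $y$ also needs care, since it does not commute with $\inf_x$. To avoid this, we bound that term by its $\sup_x$ before integrating, using $|\partial_xG*v_y|\le\|G\|_{L^2}\|\partial_x^{-1}u_{yy}\|_{L^2_x}$-type estimates, where the two $y$-derivatives are absorbed by $\varphi''$.
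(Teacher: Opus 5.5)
Your route differs from the paper's mainly in which quantity is tracked. You follow Constantin--Escher minimizers $\xi(t,y)$ and work directly with $M(t)=\int_{\mathbb{R}}\inf_x u_x\,\varphi\,\mathrm{d}y$. The paper instead follows $M_1(t)=\int_{\mathbb{R}}u_x(t,q(t,x_0,y),y)\varphi(y)\,\mathrm{d}y$ along characteristics and then uses $\int\inf_x u_x\,\varphi\le M_1$. Your Jensen step and your bounds on the $g$, $u^2$ and $u_x^2$ contributions are fine.

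\textbf{The gap: the nonlocal term.} The genuine gap is $\partial_xG*v_y=G*u_{yy}$. In your forcing bullet you integrate by parts twice in $y$. But the integrand is $(G*u_{yy})(t,\xi(t,y),y)$, evaluated at an $x$-position that depends on $y$. Here $\xi(t,\cdot)$ is only a measurable selection of minimizers and need not even be continuous in $y$. So there is no identity $\int(G*u_{yy})(t,\xi(t,y),y)\varphi\,\mathrm{d}y=\int(G*u)(t,\xi(t,y),y)\varphi''\,\mathrm{d}y$.

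The workaround in your last paragraph does not repair this. Once you pass to $\sup_x|\partial_xG*v_y|$, there is nothing left to integrate by parts, so the two $y$-derivatives cannot be moved onto $\varphi''$. Since $u_{yy}$ is not controlled by the conserved energy, no bound of the form $E^{1/2}(u_0)\|\varphi''\|_{L^2}$ can come out of it. The paper's \eqref{eq:100} has the same defect: $q(t,x_0,y)$ also depends on $y$, and the chain-rule terms containing $q_y$ are simply dropped. There, however, $q$ is smooth in $y$, so those terms could at least be tracked; with $\xi$ they cannot.

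\textbf{How to fix it.} Treat this term like the $g$-terms, using the $X^s$ control you already assume. The pointwise bound $|(G*u_{yy})(x,y)|\le\|G\|_{L^2}\|u_{yy}(\cdot,y)\|_{L^2_x}$ gives
$\int_{\mathbb{R}}\sup_x|G*u_{yy}|\,\varphi\,\mathrm{d}y\le\frac12\|\varphi\|_{L^2}\|u\|_{X^s}$ for any $s\ge2$.
This contribution belongs in the first summand of $C_3^2$. The paper's own bound \eqref{eq:108} already carries a factor $\|\varphi\|_{L^2}$ in that summand, which \eqref{eq:109} silently absorbs. The $E^{1/2}(u_0)\|\varphi''\|_{L^2}$ term then becomes harmless slack.

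\textbf{Two further points.}
\begin{enumerate}
\item This fix, like your $g$-estimate, relies on $\|u(t)\|_{X^s}\le 2\|u_0\|_{X^s}$ on all of $[0,T_0)$. But \eqref{eq:58} only provides this on a short bootstrap interval $[0,T]$, and nothing shows $T_0\le T$. You inherit this unproved step from the paper, which uses the bound in the same way.
\item Your displayed Riccati inequality is inverted. With $a=\sqrt{2/\gamma}\,C_3$ it should read $\frac{M+a}{M-a}\ge\frac{M(0)+a}{M(0)-a}\,e^{\sqrt{2\gamma}C_3t}$. Its left side lies in $(0,1)$ and tends to $1$ exactly when $M\to-\infty$. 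As you wrote it, the inequality is incompatible with $M$ decreasing.
\end{enumerate}
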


To achieve these results, we employ convolution estimates, energy estimates, and an analysis of Riccati-type dynamics along characteristics. We first overcome the technical challenges posed by the general nonlinearity to establish a  blow-up criterion (Theorem \ref{T1.1}).
Using the boundedness of $g'$ and the energy method under condition \eqref{cofg}, we then derive sufficient conditions for blow up of solutions(Theorems \ref{T1.2} and \ref{T1.3}). Finally, inspired by arguments for the CH equation \cite{LP2020} and CH-KP equations \cite{GLL2021}, we prove a Liouville-type property for  \eqref{GCHKP} under the assumption $g(u) \geq \gamma u^2$, and $g(u)>\gamma u^2$ for all $u \neq 0$. This theorem establishes that non-trivial solutions cannot vanish identically  on any non-empty open set, highlighting the uniqueness and non-locality of the solution's support.

The paper is organized as follows. Section \ref{S1} presents preliminary lemmas. Section \ref{S2} is dedicated to the proof of the blow-up criterion (Theorem \ref{T1.1}). In Section \ref{S3}, we first derive sufficient conditions for blow up of solutions under the assumption $g'\in L^\infty$ and subsequently extend these to the polynomial growth case, thereby proving Theorems \ref{T1.2} and \ref{T1.3}. Section \ref{S4} provides the proof of the Liouville-type theorem.



\section{Preliminary Lemmas}\label{S1}

\begin{lem}\label{L2.1}
\textbf{(a)} The convolution operator $G*: H^s(\mathbb{R}^2) \to H^{s}(\mathbb{R}^2)$ is bounded for any real number $s$. That is, for any $ f \in H^s(\mathbb{R}^2)$, we have
\begin{align}\label{eq:11}
\|G*f\|_{H^s(\mathbb{R}^2)}\leq \|f\|_{H^s(\mathbb{R}^2)}.
\end{align}

\textbf{(b)} The operator $(\partial_x G)*: L^{\infty}(\mathbb{R}^2) \to L^{\infty}(\mathbb{R}^2)$ is bounded and  and the identity $\|\partial_x(G*f)\|_{L^\infty(\mathbb{R}^2)} = \|(\partial_x G)*f\|_{L^\infty(\mathbb{R}^2)}$ holds. That is, for any $f \in L^\infty(\mathbb{R}^2)$, it follows
\begin{align}\label{eq:12}
\|\partial_x(G*f)\|_{L^\infty(\mathbb{R}^2)} = \|(\partial_x G)*f\|_{L^\infty(\mathbb{R}^2)} \leq  \|f\|_{L^\infty(\mathbb{R}^2)}.
\end{align}

 \textbf{(c)} The operator $\partial_x \circ (G*): H^s(\mathbb{R}^2) \to H^s(\mathbb{R}^2)$ is bounded for any real number $s$. Namely, for any $f \in H^s(\mathbb{R}^2)$,
\begin{align}\label{eq:13}
\|\partial_x(G*f)\|_{H^s(\mathbb{R}^2)} \leq  \|f\|_{H^s(\mathbb{R}^2)}.
\end{align}
\end{lem}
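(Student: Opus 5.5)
The plan is to work on the Fourier side for (a) and (c), and directly with the kernel for (b). Throughout, $G*$ denotes convolution in the $x$ variable only, with $y$ as a parameter. This reading is dictated by the identity $(1-\partial_x^2)^{-1}f=G*f$.

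For (a), compute the one-dimensional Fourier transform $\widehat{G}(\xi)=\int_{\mathbb{R}}\frac12 e^{-|x|}e^{-ix\xi}\,\dd x=\frac{1}{1+\xi^2}$. Then for $f\in H^s(\mathbb{R}^2)$ we have $\mathcal{F}(G*f)(\xi,\eta)=(1+\xi^2)^{-1}\mathcal{F}(f)(\xi,\eta)$. Since $0<(1+\xi^2)^{-1}\le 1$, multiplying $|\mathcal{F}(G*f)|^2$ by the weight $(1+|\xi|^2+|\eta|^2)^s$ and integrating yields \eqref{eq:11} directly for every real $s$. For (c), the symbol of $\partial_x\circ(G*)$ is $\frac{i\xi}{1+\xi^2}$, and its modulus is at most $\frac12\le 1$ by AM--GM. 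The same weighted Plancherel computation then gives \eqref{eq:13}; in fact it gives the sharper constant $\frac12$.

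For (b), first note that $G$ is Lipschitz, with distributional derivative $\partial_xG(x)=-\frac12\,\mathrm{sgn}(x)e^{-|x|}\in L^1(\mathbb{R})$ and no singular part, since $G$ is continuous at $0$. Hence for $f\in L^\infty$ one may differentiate under the integral, obtaining $\partial_x(G*f)=(\partial_xG)*f$. To justify this, write $G*f(x,y)=\int G(x-z)f(z,y)\,\dd z$ and use difference quotients together with dominated convergence. This relies on the uniform Lipschitz bound on $G$ and the exponential decay of $G$, which give an $L^1$ dominating function on compact $x$-intervals. Alternatively, split the integral at $z=x$ and differentiate each piece; the boundary terms cancel by continuity of $G$. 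Young's inequality then gives $\|(\partial_xG)*f\|_{L^\infty}\le\|\partial_xG\|_{L^1(\mathbb{R})}\|f\|_{L^\infty(\mathbb{R}^2)}$, and $\|\partial_xG\|_{L^1}=\int\frac12e^{-|x|}\,\dd x=1$. The same bound holds pointwise for each fixed $y$.

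The only delicate step is the justification in (b) that the derivative passes inside the convolution for merely bounded $f$, since the kernel has a jump in its derivative at the origin. I would handle it by the splitting argument just described, so that everything else reduces to elementary multiplier bounds.
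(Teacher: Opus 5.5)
Your proposal is correct and follows the paper's route: Fourier-multiplier bounds for (a) and (c), and $\|\partial_x G\|_{L^1(\mathbb{R})}=1$ with Young's inequality for (b). Your dominated-convergence (or splitting) justification of $\partial_x(G*f)=(\partial_x G)*f$, which the paper simply asserts, and the sharper constant $\frac12$ in (c) are refinements of the same argument.
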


\begin{proof}
\textbf{(a)} For brevity, denote $\mathcal{F}(f)(\xi,\eta)$ by $\widehat{f}(\xi,\eta)$. Combining Fourier transform formula for the convolution $\widehat{G*f}(\xi,\eta)=\widehat{G}(\xi)\widehat{f}(\xi,\eta)$ and $\widehat{G}(\xi)=\frac{1}{1+\xi^2}$ yields
\begin{equation}\label{eq:14}
\begin{aligned}
\| G * f\|_{H^{s}(\mathbb{R}^2)}^2 &= \int_{\mathbb{R}^2} \left| \widehat{(G * f)}(\xi,\eta) \right|^2 (1+\xi^2+\eta^2)^{s} \dd\xi \dd\eta \\
&= \int_{\mathbb{R}^2} \left| \frac{1}{1+\xi^2} \hat{f}(\xi,\eta) \right|^2 (1+\xi^2+\eta^2)^{s} \dd\xi \dd\eta \\
&\leq \int_{\mathbb{R}^2} \left|  \hat{f}(\xi,\eta) \right|^2 (1+\xi^2+\eta^2)^{s} \dd\xi \dd\eta = \|f\|_{H^{s}(\mathbb{R}^2)}^2 .
\end{aligned}
\end{equation}
Therefore, it holds that
\begin{align}\label{eq:15}
\| G * f\|_{H^{s}} \leq  \| f\|_{H^s}.
\end{align}

\textbf{(b)} From $G(x)=\frac{1}{2}e^{-|x|}$, we can compute that $\partial_x G(x) = -\frac{1}{2}\operatorname{sgn}(x)e^{-|x|}$ for $x\in \mathbb{R}$. It then follows that $\|\partial_x G\|_{L^1(\mathbb{R})} = \int_{\mathbb{R}} \left| -\frac{1}{2}\operatorname{sgn}(x)e^{-|x|} \right| \dd x = \frac{1}{2}\int_{\mathbb{R}} e^{-|x|} \dd x = 1 $. The two-dimensional convolution is defined as
\begin{align}\label{eq:16}
(\partial_x G * f)(x,y) = \int_{\mathbb{R}} \partial_x G(x-\xi)f(\xi,y)\dd\xi.
\end{align}
According to the Young inequality for convolutions, we obtain, for any $(x,y)\in\mathbb{R}^2$,
\begin{equation}\label{eq:17}
\begin{aligned}
|(\partial_x G * f)(x,y)| &\leq \int_{\mathbb{R}} |\partial_x G(x-\xi)| \cdot |f(\xi,y)| \dd\xi\\
& \leq \|f(\cdot,y)\|_{L^\infty} \|\partial_x G\|_{L^1(\mathbb{R})} = \|f(\cdot,y)\|_{L^\infty}.
\end{aligned}
\end{equation}
One yields
\begin{align}\label{eq:18}
\sup_{x,y} |(\partial_x G * f)(x,y)| \leq \sup_y \|f(\cdot,y)\|_{L^\infty_x} \leq \sup_{x,y} |f(x,y)| = \|f\|_{L^\infty(\mathbb{R}^2)}.
\end{align}

Moreover, since
\begin{align}\label{eq:20}
(G * f)(x,y) = \int_{\mathbb{R}} G(x-\xi)f(\xi,y)\dd\xi,
\end{align}
it follows that
\begin{align}\label{eq:21}
\partial_x (G * f)(x,y) = \partial_x \left( \int_{\mathbb{R}} G(x-\xi)f(\xi,y)\dd\xi \right) = \int_{\mathbb{R}} \partial_x G(x-\xi)f(\xi,y)\dd\xi = \partial_x G * f.
\end{align}
Therefore, we have
\[
\|\partial_x (G * f)\|_{L^\infty(\mathbb{R}^2)} = \|\partial_x G * f\|_{L^\infty(\mathbb{R}^2)} \leq \|f\|_{L^\infty(\mathbb{R}^2)}.
\]

\textbf{(c)} From the  properties of the Fourier transform \cite{EVANS2010}, it follows that

\begin{equation}\label{eq:22}
\begin{aligned}
\|\partial_x (G * f)\|_{H^s(\mathbb{R}^2)}^2
&= \int_{\mathbb{R}^2} (1 + \xi^2+\eta^2)^s \left| \widehat{\partial_x (G*f)}(\xi,\eta) \right|^2 \dd\xi \dd\eta\\
&= \int_{\mathbb{R}^2} (1 + \xi^2+\eta^2)^s \frac{\xi^2}{(1 + \xi^2)^2} \left| \widehat{f}(\xi,\eta) \right|^2 \dd\xi \dd\eta\\
&\leq \int_{\mathbb{R}^2} (1 + \xi^2+\eta^2)^s  \left| \widehat{f}(\xi,\eta) \right|^2 \dd\xi \dd\eta = \|f\|_{H^{s}(\mathbb{R}^2)}^2.
\end{aligned}
\end{equation}
It follows that $\|\partial_x (G * f)\|_{H^s(\mathbb{R}^2)}  \leq \|f\|_{H^s(\mathbb{R}^2)}$.  \end{proof}

\begin{lem}\label{L2.2}
Let \(s > 2\),  then for any $u\in X^s(\mathbb{R}^2)$ the following inequalities hold
\begin{equation}\label{eq:23}
\begin{aligned}
\|u^2_x\|_{H^s(\mathbb{R}^2)} \leq C\|u_x\|_{H^s(\mathbb{R}^2)}^2 \leq C\|u\|_{X^s(\mathbb{R}^2)}^2,\\
 \|u^2\|_{H^s(\mathbb{R}^2)} \leq C\|u\|_{H^s(\mathbb{R}^2)}^2 \leq C\|u\|_{X^s(\mathbb{R}^2)}^2.
\end{aligned}
\end{equation}
\end{lem}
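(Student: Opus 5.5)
The plan is to use the standard fact that $H^s(\mathbb{R}^2)$ is a Banach algebra for $s>1$, i.e. $\|fg\|_{H^s}\le C\|f\|_{H^s}\|g\|_{H^s}$, and then compare the $H^s$ norms with the $X^s$ norm. The condition $s>2$ in the statement is more than enough, since the algebra property only needs $s>d/2=1$. I would quote this property, or give a short self-contained proof as follows.

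\emph{Step 1 (algebra estimate).} Write $\langle\zeta\rangle=(1+|\zeta|^2)^{1/2}$ with $\zeta=(\xi,\eta)$. Since $\widehat{fg}=(2\pi)^{-2}\widehat f*\widehat g$ (up to the normalization of $\mathcal F$), and $\langle\zeta\rangle^s\le 2^{s}\big(\langle\zeta-\zeta'\rangle^s+\langle\zeta'\rangle^s\big)$, we get
\begin{align*}
\langle\zeta\rangle^s|\widehat{fg}(\zeta)|\le C\Big(\big(\langle\cdot\rangle^s|\widehat f|\big)*|\widehat g|+|\widehat f|*\big(\langle\cdot\rangle^s|\widehat g|\big)\Big)(\zeta).
\end{align*}
Young's inequality $L^2*L^1\subset L^2$ then gives
\begin{align*}
\|fg\|_{H^s}\le C\big(\|f\|_{H^s}\|\widehat g\|_{L^1}+\|\widehat f\|_{L^1}\|g\|_{H^s}\big).
\end{align*}
By Cauchy--Schwarz, $\|\widehat g\|_{L^1}\le\|\langle\cdot\rangle^{-s}\|_{L^2(\mathbb{R}^2)}\|g\|_{H^s}$, and $\langle\zeta\rangle^{-s}\in L^2(\mathbb{R}^2)$ exactly when $2s>2$. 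This yields $\|fg\|_{H^s}\le C\|f\|_{H^s}\|g\|_{H^s}$. The only point needing care is this integrability condition, which holds since $s>2$.

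\emph{Step 2 (apply to $u_x^2$ and $u^2$).} Take $f=g=u_x$ to get $\|u_x^2\|_{H^s}\le C\|u_x\|_{H^s}^2$, and take $f=g=u$ to get $\|u^2\|_{H^s}\le C\|u\|_{H^s}^2$.

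\emph{Step 3 (comparison with $X^s$).} From the definition of the $X^s$ norm, $\|u_x\|_{H^s}^2\le\|u\|_{X^s}^2$ and $\|u\|_{H^s}^2\le\|u\|_{X^s}^2$, since each is one of the three nonnegative summands in $\|u\|_{X^s}^2$. Combining Steps 2 and 3 gives both chains of inequalities in \eqref{eq:23}.
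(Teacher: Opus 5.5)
Your proposal is correct and matches the paper's proof. The paper likewise cites the algebra property of $H^s(\mathbb{R}^2)$ and then uses that $\|u\|_{H^s}$ and $\|u_x\|_{H^s}$ are summands of $\|u\|_{X^s}$, exactly as in your Steps 2--3. Your Step 1 adds a self-contained proof of that cited estimate; you rightly note that $s>1$ suffices and that a constant $C$ is needed, which the paper's displayed inequality $\|fg\|_{H^s}\le\|f\|_{H^s}\|g\|_{H^s}$ omits.
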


\begin{proof}
The space $H^s(\mathbb{R}^2)(s>2)$ is an algebra \cite{WMX2013}. Therefore, the conclusion follows from the inequality $$\|fg\|_{H^s(\mathbb{R}^2)} \leq \|f \|_{H^s(\mathbb{R}^2)} \|g\|_{H^s(\mathbb{R}^2)}$$ and the norm definition of the space $X^s(\mathbb{R}^2)$.
\end{proof}

\begin{lem}\label{L2.3} (Sobolev Embedding)(\cite{GLL2021})
For $s \geq 2$, the space $X^s(\mathbb{R}^2)$ is embedded into $\mathrm{Lip}(\mathbb{R}^2)$, i.e., there exists a constant $C$ such that
$\|\nabla u\|_{L^\infty(\mathbb{R}^2)} \leq C \|u\|_{X^s(\mathbb{R}^2)}$.
\end{lem}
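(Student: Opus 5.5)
The statement just made is Lemma \ref{L2.3}, the embedding $X^s(\mathbb{R}^2)\hookrightarrow \mathrm{Lip}(\mathbb{R}^2)$ for $s\ge 2$. I would prove it entirely on the Fourier side, using the standard two-dimensional embedding $H^{r}(\mathbb{R}^2)\hookrightarrow L^\infty(\mathbb{R}^2)$ for $r>1$. The approach is to bound $\partial_x u$ and $\partial_y u$ separately in $L^\infty$ by some $H^r$ norm with $r>1$, and then control that norm by $\|u\|_{X^s}$.

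\textbf{The $\partial_x$ derivative.} Since $u\in X^s$ gives $\partial_x u\in H^s$ with $s\ge 2>1$, we have
\[
\|\partial_x u\|_{L^\infty}\le \|\widehat{\partial_x u}\|_{L^1}\le \Big(\int_{\mathbb{R}^2}(1+\xi^2+\eta^2)^{-s}\,\dd\xi\,\dd\eta\Big)^{1/2}\|\partial_x u\|_{H^s}\le C\|u\|_{X^s}.
\]
The integral converges because $2s>2$. Note that $u\in H^s$ alone would already give this, since $\|\partial_x u\|_{H^{s-1}}\le\|u\|_{H^s}$ and $s-1\ge1$.

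\textbf{The $\partial_y$ derivative, and the main obstacle.} The delicate case is the borderline $s=2$, where $\partial_y u\in H^{s-1}=H^1$. In two dimensions $H^1$ does not embed into $L^\infty$, so $u\in H^2$ by itself is not enough, and the extra $x$-regularity built into $X^s$ must be used. The plan is to split
\[
\int_{\mathbb{R}^2}|\eta||\hat u|\,\dd\xi\,\dd\eta
\]
using the weights available in the norm. The norm $\|u\|_{X^s}^2$ controls
\[
\int (1+\xi^2+\eta^2)^s\big(1+\xi^2+\xi^{-2}\big)|\hat u|^2,
\]
so it suffices to show that
\[
\int \frac{\eta^2}{(1+\xi^2+\eta^2)^s\,(1+\xi^2+\xi^{-2})}\,\dd\xi\,\dd\eta<\infty
\]
for $s=2$; Cauchy--Schwarz then finishes the proof. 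Cases $s>2$ follow, for instance, from the fact that $\partial_y u\in H^{s-1}$ with $s-1>1$. For fixed $\xi$, the $\eta$-integral of $\eta^2(1+\xi^2+\eta^2)^{-2}$ behaves like $(1+\xi^2)^{-1/2}$ times a divergent factor — it diverges logarithmically, since the integrand decays only like $\eta^{-2}\cdot\eta^{2}/\eta^{2}$.

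A careful count is therefore needed. Because $\eta^2/(1+\xi^2+\eta^2)^2\sim 1/\eta^2$ at large $\eta$, the $\eta$-integral actually converges and gives about $(1+\xi^2)^{-1/2}$. The remaining $\xi$-integral,
\[
\int (1+\xi^2)^{-1/2}\big(1+\xi^2+\xi^{-2}\big)^{-1}\,\dd\xi,
\]
converges: near $0$ it behaves like $\xi^2$, and at infinity it decays like $|\xi|^{-3}$. The key point is that the weight $\xi^2$ coming from $\partial_x u$ supplies the needed decay at infinity, while the weight $\xi^{-2}$ coming from $\partial_x^{-1}u$ is harmless.

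\textbf{Conclusion.} Putting the two estimates together gives
\[
\|\nabla u\|_{L^\infty}\le\|\xi\hat u\|_{L^1}+\|\eta\hat u\|_{L^1}\le C\|u\|_{X^s}.
\]
The step to verify most carefully is the $\eta$-integral bound with its exact power of $(1+\xi^2)$. If that bound turns out to be too weak, the fallback is to interpolate: use $\partial_y u\in H^1$ together with $\partial_x\partial_y u\in H^1$, which is available because $\partial_x u\in H^2$.
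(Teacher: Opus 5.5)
Your proof is correct, but it takes a different route from the one the paper relies on. The paper gives no argument for Lemma \ref{L2.3} and only cites \cite{GLL2021}. With the paper's own tools (and presumably in \cite{GLL2021}), the borderline term $\partial_y u$ at $s=2$ is handled in physical space by the anisotropic inequality of Lemma \ref{C2.1}(1) applied to $f=u_y$:
$\|u_y\|_{L^\infty}^2\le C\|u_y\|_{L^2}^{1/2}\|u_{xy}\|_{L^2}^{1/2}\|u_{yy}\|_{L^2}^{1/2}\|u_{xyy}\|_{L^2}^{1/2}$.
Here $u_y,u_{yy}$ are controlled by $\|u\|_{H^2}$ and $u_{xy},u_{xyy}$ by $\|\partial_x u\|_{H^2}$. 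This is the same device the paper uses to bound $\|v_y\|_{L^\infty}$ in the proof of Theorem \ref{T1.2}, and it is exactly your ``fallback.''

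You instead run Cauchy--Schwarz against the $X^s$ weight on the Fourier side, and the computation closes. For fixed $\xi$ one has exactly $\int_{\mathbb{R}}\eta^2(1+\xi^2+\eta^2)^{-2}\,\mathrm{d}\eta=\frac{\pi}{2}(1+\xi^2)^{-1/2}$. Since $(1+\xi^2+\xi^{-2})^{-1}\le(1+\xi^2)^{-1}$, the whole weight integral is at most $\frac{\pi}{2}\int_{\mathbb{R}}(1+\xi^2)^{-3/2}\,\mathrm{d}\xi=\pi$.

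Each route has its advantages:
\begin{itemize}
\item Your route gives a slightly stronger conclusion: $\widehat{\nabla u}\in L^1$, so $\nabla u$ is continuous and vanishes at infinity.
\item It also gives an explicit constant.
\item It shows that the $\partial_x^{-1}u$ component of the norm plays no role at all; only $u,\partial_x u\in H^2$ are used. So you can drop the $\xi^{-2}$ weight outright rather than calling it ``harmless.''
\item The anisotropic inequality instead yields a \emph{multiplicative} bound, which is the form the paper actually needs in its $s=2$ energy estimates (e.g.\ \eqref{eq:42}, \eqref{eq:49}).
\end{itemize}

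One thing to fix in the write-up. The sentence asserting that the $\eta$-integral ``diverges logarithmically'' is false: the integrand is $O(\eta^{-2})$ as $|\eta|\to\infty$, as you note in the very next sentence. Delete it, and delete the fallback paragraph, which is then unnecessary.
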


\begin{lem}\label{C2.1} ( \cite{GLL2021}) For all $u \in X^s(\mathbb R^2)$ with $s \geq 2$, the following three inequalities hold:
\begin{align*}
&(1)\ \| u\|_{L^{\infty}}^2\leq C\| u\|_{L^{2}}^{\frac{1}{2}}\| u_{x}\|_{L^{2}}^{\frac{1}{2}}\| u_{y}\|_{L^2}^{\frac{1}{2}}
\| u_{xy}\|_{L^2}^{\frac{1}{2}};\\
&(2)\ \| u\|_{L^{\infty}}^3\leq C\| u\|_{L^{2}}\| u_{x}\|_{L^{2}}\| u_{y}\|_{L^{\infty}};\\
&(3)\ \| u\|_{L^{\infty}}\leq C\left(\| u\|_{L^{2}}+\| u_{x}\|_{L^{2}}+\| u_{y}\|_{L^{\infty}}\right).
\end{align*}
\end{lem}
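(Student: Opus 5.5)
The plan is to reduce all three inequalities to one-dimensional Gagliardo--Nirenberg (Agmon-type) estimates, applied separately in $x$ and in $y$, and then to combine them with Cauchy--Schwarz or Young's inequality. Since $X^s\subset H^s$ with $s\ge 2$, we may assume $u$ is smooth and decays (Schwartz) and conclude by density; all norms appearing are continuous in $X^s$.

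For (1) the basic tool is the one-dimensional inequality $\|f\|_{L^\infty(\mathbb{R})}^2\le \|f\|_{L^2}\|f'\|_{L^2}$. It follows from $f(x)^2=2\int_{-\infty}^x ff'\,\dd \xi$, and similarly from the right, averaged. Fix $y$ and apply it in $x$:
\[|u(x,y)|^2\le \|u(\cdot,y)\|_{L^2_x}\|u_x(\cdot,y)\|_{L^2_x}.\]
Next we control $\sup_y \|u(\cdot,y)\|_{L^2_x}^2$. Setting $a(y)=\|u(\cdot,y)\|^2_{L^2_x}$, we have $|a'(y)|\le 2\|u(\cdot,y)\|_{L^2_x}\|u_y(\cdot,y)\|_{L^2_x}$, and hence $\sup_y a\le \|u\|_{L^2}\|u_y\|_{L^2}$ by Cauchy--Schwarz. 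In the same way $\sup_y\|u_x(\cdot,y)\|_{L^2_x}^2\le \|u_x\|_{L^2}\|u_{xy}\|_{L^2}$. Multiplying these bounds gives $\|u\|_{L^\infty}^4\le \|u\|_{L^2}\|u_y\|_{L^2}\|u_x\|_{L^2}\|u_{xy}\|_{L^2}$, which is (1) with $C=1$.

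For (2) we use a different pointwise argument, which I expect to be the only slightly delicate step. Fix a point $(x_0,y_0)$ with $|u(x_0,y_0)|\ge \frac12\|u\|_{L^\infty}=:M/2$; such a point exists up to an $\epsilon$ of slack. Set $L=\|u_y\|_{L^\infty}$. Then for $|y-y_0|\le \delta:=M/(4L)$ we have $|u(x_0,y)|\ge M/4$. For each such $y$, the 1D inequality in $x$ gives
\[ \frac{M^2}{16}\le \|u(\cdot,y)\|_{L^2_x}\|u_x(\cdot,y)\|_{L^2_x}. \]
Integrating over $|y-y_0|\le\delta$ and applying Cauchy--Schwarz in $y$ yields $2\delta M^2/16\le \|u\|_{L^2}\|u_x\|_{L^2}$, that is, $M^3\le 32L\|u\|_{L^2}\|u_x\|_{L^2}$. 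This is (2). If $L=0$ then $u$ is independent of $y$ and lies in $L^2(\mathbb{R}^2)$, so $u\equiv0$ and there is nothing to prove.

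Finally, (3) follows from (2) by Young's inequality with exponents $(3,3,3)$:
\[\|u\|_{L^\infty}\le C\big(\|u\|_{L^2}\|u_x\|_{L^2}\|u_y\|_{L^\infty}\big)^{1/3}\le C\big(\|u\|_{L^2}+\|u_x\|_{L^2}+\|u_y\|_{L^\infty}\big).\]
The only nonroutine point is the localization in $y$ used in (2). The remainder is justifying the density step, which is standard because $u_y\in L^\infty$ follows from Lemma \ref{L2.3} for $s\ge2$.
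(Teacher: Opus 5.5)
Your proof is correct. The paper itself gives no argument for this lemma; it is quoted from \cite{GLL2021} without proof, so there is no in-paper proof to compare against. Your write-up therefore supplies a self-contained proof, along the standard anisotropic lines: the one-dimensional bound $\|f\|_{L^\infty}^2\le\|f\|_{L^2}\|f'\|_{L^2}$ in $x$, combined with control of the $y$-dependence.

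Each step checks out.
\begin{itemize}
\item In (1) you obtain the constant $C=1$.
\item In (2), localizing to $|y-y_0|\le M/(4L)$ gives $M^3\le 32\,\|u_y\|_{L^\infty}\|u\|_{L^2}\|u_x\|_{L^2}$. This is the one-dimensional inequality $\|f\|_{L^\infty}^3\le C\|f\|_{L^2}^2\|f'\|_{L^\infty}$ applied to $f=u(x_0,\cdot)$, together with $\int_{\mathbb{R}}|u(x_0,y)|^2\,\mathrm{d}y\le\|u\|_{L^2}\|u_x\|_{L^2}$.
\item (3) follows from (2) by the arithmetic--geometric mean inequality, with an absolute constant.
\end{itemize}

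Two cosmetic points.
\begin{itemize}
\item Because of the factor $\tfrac12$, once $M>0$ a point $(x_0,y_0)$ with $|u(x_0,y_0)|\ge M/2$ exists outright, so no $\epsilon$ of slack is needed.
\item Lemma \ref{L2.3} is also only cited in the paper. To keep the density step independent of it, apply your own (1) to $u_y$. Since $u_y$, $u_{xy}$, $u_{yy}$, $u_{xyy}$ are all bounded in $L^2$ by $C\|u\|_{X^2}$, this gives $\|u_y\|_{L^\infty}\le C\|u\|_{X^2}$, so $\|u_y\|_{L^\infty}$ is continuous along your approximating sequence.
\end{itemize}
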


\begin{lem}\label{L2.4} (\cite{BCD2011}) Let \(g\) be a smooth function on \(\mathbb R\) such that \(g(0) = 0\). If \(u \in H^s(\mathbb R^2) \cap L^{\infty}(\mathbb R^2)\) for \(s > 0\), then the composite function $g \circ u$ also belongs to the space $H^s(\mathbb{R}^2) \cap L^{\infty}(\mathbb{R}^2)$, and we have
\[ \| g \circ u \|_{H^s} \leq C(g', \|u\|_{L^{\infty}}) \|u\|_{H^{s}}.\]
Here, $C(g',\| u\|_{L^{\infty}})$ depends on the maximum value of the function $g$ and its derivative defined on the closed ball $B(0, \|u\|_{L^{\infty}})$ .
\end{lem}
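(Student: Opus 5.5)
The statement just above is Lemma \ref{L2.4}, a Moser-type composition estimate. I would prove it by reducing to a Bony paraproduct-free telescoping argument, i.e.\ Meyer's first linearization method as presented in \cite{BCD2011}. Let $\Delta_j$ and $S_j$ be the Littlewood--Paley dyadic blocks and partial sums in $\mathbb{R}^2$. Write $u_j:=S_j u$. Since $g(0)=0$ and $S_j u\to u$ in $L^2$ with uniformly bounded sup norm, the telescoping identity
\[
g(u)=\sum_{j\ge 0}\big(g(S_{j+1}u)-g(S_j u)\big)=\sum_{j\ge0} m_j\,\Delta_j u,\qquad m_j:=\int_0^1 g'\big(S_ju+\tau\Delta_j u\big)\,\dd\tau,
\]
holds in $L^2$. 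Here $\|S_j u\|_{L^\infty},\|\Delta_j u\|_{L^\infty}\le C\|u\|_{L^\infty}$, so every argument of $g'$ stays in a ball $B(0,C\|u\|_{L^\infty})$.

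\textbf{Step 1 (symbol bounds).} Bernstein's inequality gives $\|\partial^\alpha S_j u\|_{L^\infty}\le C2^{j|\alpha|}\|u\|_{L^\infty}$, and the same bound for $\Delta_j u$. By the Fa\`a di Bruno formula applied to $g'\circ(\cdot)$, this yields
\[
\|\partial^\alpha m_j\|_{L^\infty}\le C_\alpha\big(g,\|u\|_{L^\infty}\big)\,2^{j|\alpha|}
\]
for every multi-index $\alpha$. The constant depends only on the derivatives of $g$ on that ball.

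\textbf{Step 2 (summation lemma).} I would use the standard lemma that if $\|\partial^\alpha m_j\|_{L^\infty}\le C2^{j|\alpha|}$ for $|\alpha|\le [s]+1$ and $s>0$, then
\[
\Big\|\sum_j m_j\Delta_j u\Big\|_{H^s}\le C\|u\|_{H^s}.
\]
To prove it, decompose $\Delta_{q}(m_j\Delta_j u)$. For $q\le j+N_0$ one bounds it trivially by $\|\Delta_j u\|_{L^2}$. For $q>j+N_0$ one writes $\Delta_q=2^{-qM}\tilde\Delta_q\,\partial^{M}$ with $M>s$ and distributes the derivatives by Leibniz. This gives
\[
\|\Delta_q(m_j\Delta_ju)\|_{L^2}\le C2^{-(q-j)M}\|m\|\,\|\Delta_j u\|_{L^2}.
\]
Multiplying by $2^{qs}$ and using a discrete Young inequality over $q-j$ (which needs $s>0$ for the low-to-high part and $M>s$ for the tail) yields the $\ell^2$ bound, and hence the $H^s$ norm.

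\textbf{Step 3 ($L^\infty$ part).} The $L^\infty$ membership is immediate from $|g(u)|\le \sup_{B(0,\|u\|_\infty)}|g'|\,|u|$.

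The main obstacle is Step 2, in particular the book-keeping for noninteger $s$ and the case $q\le j$, where positivity of $s$ is essential. Since the lemma is cited from \cite{BCD2011}, I would follow their proof, which is Theorem 2.87 there, and only check that the constant depends on $g$ through finitely many derivatives on $B(0,C\|u\|_{L^\infty})$. This matches the constant $C(g',\|u\|_{L^\infty})$ in the statement.
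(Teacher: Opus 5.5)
Your proposal is correct and follows the same route the paper relies on. The paper gives no argument of its own and simply cites \cite{BCD2011}, whose composition theorem is proved by exactly the Meyer first linearization $g(u)=\sum_j m_j\Delta_j u$, with the symbol bounds on $m_j$ and the summation lemma you describe. Your remark that the constant actually depends on finitely many derivatives of $g$ (up to order $[s]+2$) on a ball of radius $C\|u\|_{L^\infty}$ is more accurate than the paper's gloss of $C(g',\|u\|_{L^\infty})$, which mentions only $g$ and $g'$ on $B(0,\|u\|_{L^\infty})$.
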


\begin{lem}\label{L2.5} (\cite{BCD2011}) Let \(g\) be a smooth function on \(\mathbb R\) such that \(g(0) = 0\). If $u \in L^{\infty}(\mathbb R^{2})$, then the composite functions $g' \circ u$ and $g'' \circ u$ also belong to the space $L^{\infty}(\mathbb R^{2})$ and satisfy
\begin{align*}
 \| g^{\prime}\circ u\|_{L^{\infty}}\leq C(g^{\prime},\| u\|_{L^{\infty}})\left(1+\| u\|_{L^{\infty}}\right),\\
\| g^{\prime\prime}\circ u\|_{L^{\infty}}\leq C(g^{\prime\prime},\| u\|_{L^{\infty}})(1+\| u\|_{L^{\infty}}).
\end{align*}
Here, $C\left(g^{\prime},\| u\|_{L^{\infty}}\right)$, $C\left(g^{\prime\prime},\| u\|_{L^{\infty}}\right)$ depends on the maximum value of the function $g$ and its derivative defined on the closed ball $B(0, \|u\|_{L^{\infty}})$ .
\end{lem}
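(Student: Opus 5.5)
The statement just above is Lemma \ref{L2.5}, a pointwise composition estimate. I will prove it directly from the smoothness of $g$, without Sobolev machinery.

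\textbf{Setup.} Set $M=\|u\|_{L^\infty}<\infty$. For almost every $(x,y)\in\mathbb{R}^2$ we have $|u(x,y)|\le M$, so $u(x,y)$ lies in the closed ball $B(0,M)=[-M,M]$. Since $g\in C^\infty(\mathbb{R})$, both $g'$ and $g''$ are continuous, hence bounded on this compact interval. Define
\[
C(g',M):=\max_{|\xi|\le M}|g'(\xi)|,\qquad C(g'',M):=\max_{|\xi|\le M}|g''(\xi)|.
\]

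\textbf{Measurability and the bound.} The functions $g'\circ u$ and $g''\circ u$ are measurable, being continuous functions of a measurable function. Pointwise almost everywhere they satisfy
\[
|g'(u(x,y))|\le C(g',M)\le C(g',M)(1+M),
\]
and the same holds with $g''$ in place of $g'$. Taking the essential supremum gives both inequalities, and in particular $g'\circ u,\ g''\circ u\in L^\infty(\mathbb{R}^2)$.

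\textbf{Remarks.} The factor $(1+\|u\|_{L^\infty})$ is harmless, since $1+M\ge 1$. The hypothesis $g(0)=0$ is not needed for this lemma. It matters only for Lemma \ref{L2.4}, where it guarantees that $g\circ u$ decays, so that $g\circ u\in L^2$.

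If one wants $C$ to be expressed through $g$ itself on a ball, one can use $|g'(\xi)|\le |g'(0)|+M\max_{[-M,M]}|g''|$ via the mean value theorem. This is not necessary, though.

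\textbf{Main obstacle.} There is no real obstacle. The only point requiring care is that the constants depend only on the values of $g$ and its derivatives on $[-M,M]$, and this is exactly what compactness of $[-M,M]$ together with continuity of $g'$ and $g''$ provides.
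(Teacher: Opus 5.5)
Your proof is correct. The paper gives no argument of its own for Lemma~\ref{L2.5}; it only attributes the lemma to Bahouri--Chemin--Danchin, where it sits alongside the $H^s$ composition estimate of Lemma~\ref{L2.4}.

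Your route is the direct, self-contained one: $g'$ and $g''$ are continuous on the compact interval $[-M,M]$, this gives a pointwise a.e.\ bound, and you take the essential supremum. It also yields a slightly sharper statement than the one written:
\begin{itemize}
\item the factor $(1+\|u\|_{L^\infty})$ is superfluous;
\item the hypothesis $g(0)=0$ is never used;
\item the constant $C(g^{(k)},M)=\max_{|\xi|\le M}|g^{(k)}(\xi)|$ is explicitly nondecreasing in $M$.
\end{itemize}
The monotonicity is exactly what the paper relies on without comment when it later replaces $\|u\|_{L^\infty}$ by an a priori bound $M$ in constants such as $C(\gamma,g',M)$.

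One small correction concerns your closing remark. The mean value bound $|g'(\xi)|\le|g'(0)|+M\max_{[-M,M]}|g''|$ expresses the constant through $g'(0)$ and $g''$ on the ball, not through $g$ itself. What it does show is where the form $C\,(1+\|u\|_{L^\infty})$ in the statement naturally comes from, namely with $C=\max\{|g'(0)|,\max_{[-M,M]}|g''|\}$.
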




\noindent \textbf{Remark:} The constant $C$ in this paper is not necessarily equal.

\section{Blow-up necessary condition of solution}\label{S2}

The literature \cite{WKZ2022} establishes blow-up criteria for the cases $g(u)=u^3$ or $u^4$, demonstrating that a necessary condition for solutions of system \eqref{eq:3} to blow up in finite time is the divergence of the gradient integral. However, Theorem \ref{T1.1} shows that, for general infinitely differentiable functions $g(u)$, a necessary condition for finite-time blow-up of solutions to system \eqref{eq:3} is the divergence of the integral of the squared gradient.

\noindent\textbf{The proof of Theorem \ref{T1.1}}

Let the global maximal existence time of the solution $u$ be $T_{u_0}^* < \infty$. Prove the conclusion by contradiction. Suppose the integral $\int_0^{T_{u_0}^*} \|\nabla u(\tau)\|_{L^\infty(\mathbb{R}^2)}^2 \mathrm{d}\tau$ is bounded, then $\int_0^{T_{u_0}^*} \|\nabla u(\tau)\|_{L^\infty(\mathbb{R}^2)} \mathrm{d}\tau$ is also bounded. Let $M>0$ large enough and determined later such that $\sup\limits_{\tau\in[0,T^*_{u_0}]}\| u(\tau)\|_{L^\infty}<M.$

First, we derive the energy estimate for \(X^s\): for \(s \geq 2\), there exists \(T > 0\) such that for any \(t \in [0, T]\), we have
\begin{equation}\label{eq:25}
\sup_{\tau\in[0,t]}\| u(\tau)\|_{X^{s}}\leq 2\| u_{0}\|_{X^{s}}.
\end{equation}

\textbf{Step 1}: $L^2-$estimate. From $u_{y}=v_{x}$, we get $\partial_{x}^{-1}u_{y}=v$. Applying $\partial_{x}^{-1}$ to  \eqref{GCHKP}, taking the $ L^{2}$ inner product with $u$ and integrating by parts, we have
\begin{equation}\label{eq:26}
\begin{split}
\frac{1}{2}\frac{\mathrm{d}}{\mathrm{d}t}\left(\| u\|_{L^{2}}^{2}+\| u_{x}\|_{L^{2}}^{2}\right)
=-\int_{\mathbb{R}^{2}}\left(\frac{g(u)}{2}\right)_{x} u \mathrm{d}x \mathrm{d}y
+\gamma\int_{\mathbb{R}^{2}}(2u_{x}u_{xx}u+u^{2}u_{xxx})\mathrm{d}x \mathrm{d}y.
\end{split}
\end{equation}
Using integration by parts, we obtain $\int_{\mathbb{R}^{2}}(2uu_{x}u_{xx}+u^{2}u_{xxx})\mathrm{d}x \mathrm{d}y=0$ and
\(-\int_{\mathbb{R}^{2}}\left(\frac{g(u)}{2}\right)_{x}u\mathrm{d}x\mathrm{d}y=0,\) Due to the smoothness of $g(u)$.
Thus,
\begin{equation}\label{eq:27}
\frac{\mathrm{d}}{\mathrm{d}t}\left(\| u\|_{L^{2}}^{2}+\| u_{x}\|_{L^{2}}^{2}\right)=0.
\end{equation}

\textbf{Step 2}: $X^s-$estimate. Applying the operators $\partial_x^{-1}$ and $\Lambda^s$  to equation \eqref{GCHKP} respectively, one yields:
\begin{equation}\label{eq:28}
\partial_{t}\partial_{x}^{-1}\Lambda^{s}u-\partial_ {t}\Lambda^{s}u_{x}+\frac{1}{2}\Lambda^{s}g(u)-\gamma\Lambda^{s}\left(uu_{x}\right)_{x}+\frac{\gamma}{2}\Lambda^{s}\left(u_{x}^{2}\right)
+\Lambda^{s}\partial_{y}\partial_{x}^{-1}v=0.
\end{equation}
Taking the $L^2$ inner product of equation \eqref{eq:28} with $\Lambda^{s}\partial_{x}^{-1}u$ over the whole space, we obtain
\begin{equation}\label{eq:29}
\begin{split}
\frac{1}{2}\frac{\mathrm{d}}{\mathrm{d}t}(\| \partial_{x}^{-1}\Lambda^{s}u\|_{L^{2}}^{2}+\| \Lambda^{s}u\|_{L^{2}}^{2})=\sum_{i=1}^{4}A_{i},
\end{split}
\end{equation}
where
$$A_{1}=-\int_{\mathbb{R}^{2}}\Lambda^{s}\partial_{x}^{-1}v_{y}\Lambda^{s}\partial_{x}^{-1}u\mathrm{d}x\mathrm{d}y,
\quad A_{2}=-\int_{\mathbb{R}^{2}}\frac{1}{2}\Lambda^{s}g(u)\Lambda^{s}\partial_{x}^{-1}u\mathrm{d}x\mathrm{d}y,$$
$$A_{3}=\int_{\mathbb{R}^{2}}\gamma \Lambda^{s}\left(uu_{x}\right)_{x}\Lambda^{s}\partial_{x}^{-1}u\mathrm{d}x\mathrm{d}y,
\quad A_{4}=-\int_{\mathbb{R}^{2}}\frac{\gamma}{2}\Lambda^{s}\left(u_{x}^{2}\right)\Lambda^{s}\partial_{x}^{-1}u\mathrm{d}x\mathrm{d}y.$$
Clearly, $A_1=0$. By Lemma \ref{L2.4},
$$A_2\leq C(g^{\prime},\| u\|_{L^{\infty}})(\| u\|_{H^{s}}^{2}+\|\partial_{x}^{-1}u\|_{H^{s}}^{2}).$$
According to the product estimate, we obtain
$$A_3\leq C\gamma \| u\|_{L^{\infty}}(\| u\|_{H^{s}}^{2}+\| u_{x}\|_{H^{s}}^{2}),\quad
A_4\leq C\gamma \| u_{x}\|_{L^{\infty}}(\| u_{x}\|_{H^{s}}^{2}+\|\partial_{x}^{-1}u\|_{H^{s}}^{2}).$$
Substituting the estimates for $A_1$ to $A_4$ into equation \eqref{eq:29} yields that for $s \geq 2$, we have
\begin{equation}\label{eq:30}
\begin{split}
\frac{\mathrm{d}}{\mathrm{d}t}(\| \partial_{x}^{-1}u\|_{H^{s}}^{2}+\| u\|_{H^{s}}^{2})
&\leq C(\gamma,g^{\prime},\|u\|_{L^{\infty}})(1+\|u\|_{L^{\infty}}+\| u_{x}\|_{L^{\infty}})\\
&\quad \times\big(\| u\|_{H^{s}}^{2}+\| u_{x}\|_{H^{s}}^{2}+\| \partial_{x}^{-1}u\|_{H^{s}}^{2}\big).
\end{split}
\end{equation}
Combining this with the definition of the $X^s$ norm, we can derive that
\begin{equation}\label{eq:31}
\frac{\mathrm{d}}{\mathrm{d}t}\left(\| \partial_{x}^{-1}u\|_{H^{s}}^{2}
+\| u\|_{H^{s}}^{2}\right)\leq  C(\gamma,g^{\prime},\|u\|_{L^{\infty}})(1+\|u\|_{L^{\infty}}
+\| u_{x}\|_{L^{\infty}})\| u\|_{X^{s}}^{2}\,\, (s\geq2).
\end{equation}

To close the inequality \eqref{eq:30}, we will estimate $\|\Lambda^{s}u_{x}\|_{L^{2}}^{2}$  by considering the cases  $s>2$ and $s=2$.

\textbf{Case 1:}  $s > 2$. Applying the operator $\Lambda^s$ to equation \eqref{GCHKP} and taking the $L^2$ inner product with $\Lambda^{s} u$ over the entire space, we find
\begin{equation}\label{eq:32}
\begin{split}
&\qquad\frac {1}{2}\frac{\mathrm{d}}{\mathrm{d}t}(\|\Lambda^{s}u\|_{L^{2}}^{2}+\|\Lambda^{s}u_{x}\|_{L^{2}}^{2})\\
&=-\int_{\mathbb{R}^{2}}
\left(\frac {1}{2}\Lambda^{s}g(u)-\frac {\gamma}{2}\Lambda^{s}\left(u_{x}^{2}\right)\right)_{x}\Lambda^{s}u\mathrm{d}x\mathrm{d}y\\
&\quad+\gamma\int_{\mathbb{R}^{2}}\Lambda^{s}\left(uu_{xx}\right)_{x}\Lambda^{s}u\mathrm{d}x\mathrm{d}y
-\int_{\mathbb{R}^{2}}\Lambda^{s}v_{y}\Lambda^{s}u
\mathrm{d}x\mathrm{d}y\\
&=:\sum_{i=1}^{3}B_{i}.
\end{split}
\end{equation}
By the product estimate and Lemma \ref{L2.4}, we obtain
$$B_1 \leq C(\gamma)\big[C(g^{\prime},\|u\|_{L^{\infty}})+\|u_{x}\|_{L^{\infty}}\big](\|u\|_{H^{s}}^2+\| u_{x}\|_{H^{s}}^2).$$
Applying the commutator estimate, it is easy to follow that
$$| B_{2}| \leq C(\gamma)(\|\nabla u\|_{L^{\infty}}+\|u_{xx}\|_{L^{\infty}})(\| u\|_{H^{s}}^{2}+\| u_{x}\|_{H^{s}}^{2}).$$
Clearly, $B_3=0$. Inserting these estimates for $B_1$, $B_2$ and $B_3$ into \eqref{eq:32}, one gives
\begin{equation}\label{eq:33}
\begin{split}
\frac{\mathrm{d}}{\mathrm{d}t}\big(\| u\|_{H^{s}}^{2}+\| u_{x}\|_{H^{s}}^{2}\big)
\leq C(\gamma,g^{\prime},\|u\|_{L^{\infty}})(1+\|\nabla u\|_{L^{\infty}}+\| u_{xx}\|_{L^{\infty}})\|u\|_{X^{s}}^2.
\end{split}
\end{equation}
Combining equations \eqref{eq:31} and \eqref{eq:33} then  it is easy to yield
\begin{equation}\label{eq:34}
\begin{split}
\frac{\mathrm{d}}{\mathrm{d}t}\left(2\| u\|_{H^{s}}^{2}+\| u_{x}\|_{H^{s}}^{2}+\| \partial_{x}^{-1}u\|_{H^{s}}^{2}\right)
&\leq C(\gamma, g',\|u\|_{L^\infty})(2+\|u\|_{L^\infty}+\|u_x\|_{L^\infty}\\
&\quad+\|\nabla u\|_{L^\infty}+\|u_{xx}\|_{L^\infty})\|u\|_{X^s}^2.
\end{split}
\end{equation}

\textbf{Case 2:} $s=2$. The embedding $H^{1}\hookrightarrow L^{\infty}$ fails, so $\| u_{xx}\|_{L^{\infty}}$ in equation \eqref{eq:33} can no longer be bounded using the embedding inequality. Thus, a new approach is required to estimate the higher-order derivatives of $ u $ for $ s = 2$.  Following the arguments in \cite{GLL2021} and Lemma \ref{C2.1}, we have
\begin{align}
\|\nabla u\|_{L^{\infty}}&\leq C(\|\nabla u\|_{L^{2}}+\|\nabla u_{x}\|_{L^{2}}+\|\nabla u_{y}\|_{L^{\infty}}),\label{eq:35}\\
\| u_{xx}\|_{L^{\infty}}&\leq C(\| u_{xx}\|_{L^{2}}+\| u_{xxx}\|_{L^{2}}+\| u_{xxy}\|_{L^{\infty}}).\label{eq:36}
\end{align}
To control these terms on the right-hand side of the above two inequalities, namely $\|\nabla u\|_{L^{2}}$, \(\|\nabla u_{x}\|_{L^{2}}\) and \(\| u_{xxx}\|_{L^{2}}\), we apply the operators \(\nabla\), \(\nabla\partial_{x}\), and \(\nabla\partial_{x}^{2}\) to the first equation of system \eqref{eq:3}. This yields
\begin{align}
\partial_{t}\nabla u&+\gamma u\partial_{x}\nabla u+\gamma u_{x}\nabla u
+G_{x}\ast\big(\frac{1}{2}g^{\prime}(u)\nabla u
-\gamma u\nabla u+\gamma u_{x}\nabla u_{x}\big)+G\ast\nabla v_{y}=0,\label{eq:37}\\
\partial_{t}\nabla u_{x}&+\gamma u\partial_{x}\nabla u_{x}+\gamma u_{x}\nabla u_{x}+\gamma u_{xx}\nabla u-\frac{1}{2}g^{\prime}(u)\nabla u+\gamma u\nabla u\nonumber\\
&\quad+G\ast\left(\frac{1}{2}g^{\prime}(u)\nabla u-\gamma u\nabla u+\gamma u_{x}\nabla u_{x}\right)+G_{x}\ast\nabla v_{y}=0,\label{eq:38}\\
\partial_{t}\nabla u_{xx}&+\gamma u\partial_{x}\nabla u_{xx}+2\gamma u_{x}\nabla u_{xx}+2\gamma u_{xx}\nabla u_{x}+\gamma u_{xxx}\nabla u\nonumber\\
&\quad-\frac{1}{2}\left[g^{\prime\prime}(u)u_{x}\nabla u+g^{\prime}(u)\nabla u_{x}\right]+\gamma u_{x}\nabla u+\gamma u\nabla u_{x}\nonumber\\
&\quad+G_{x}\ast\left(\frac{1}{2}g^{\prime}(u)\nabla u-\gamma u\nabla u+\gamma u_{x}\nabla u_{x}\right)+G_{xx}\ast\nabla v_{y}=0.\label{eq:39}
\end{align}
 Taking the $L^2$ inner product of \eqref{eq:37}-\eqref{eq:39} with $\nabla u, \nabla \partial_x u, \nabla \partial_x^2 u$ respectively and summing the results, we obtain
\begin{equation}\label{eq:40}
\frac{1}{2}\frac{\mathrm{d}}{\mathrm{d}t}\left(\|\nabla u\|_{L^{2}}^{2}+\|\nabla u_{x}\|_{L^{2}}^{2}\right)=I_{1}+I_{3}+I_{4}+I_{6},
\end{equation}
\begin{equation}\label{eq:41}
\frac{1}{2}\frac{\mathrm{d}}{\mathrm{d}t}\left(\|\nabla u\|_{L^{2}}^{2}+\|\nabla u_{x}\|_{L^{2}}^{2}+\|\nabla u_{xx}\|_{L^{2}}^{2}\right)=\sum_{j=1}^{7}I_{j},
\end{equation}
where
\begin{align*}
&I_{1}=-\gamma\int_{\mathbb{R}^{2}}(u\partial_{x}\nabla u\cdot\nabla u+u\partial_{x}\nabla u_{x}\cdot\nabla u_{x})\mathrm{d}x\mathrm{d}y,\\
&I_{2}=-\gamma\int_{\mathbb{R}^{2}}u\partial_{x}\nabla u_{xx}\cdot\nabla u_{xx}\mathrm{d}x\mathrm{d}y,\\
&I_{3}=-\int_{\mathbb{R}^{2}}\left[\gamma u_{x}\nabla u+G_{x}\ast\left(\frac{1}{2}g^{\prime}(u)\nabla u-\gamma u\nabla u
+\gamma u_{x}\nabla u_{x}\right)\right]\cdot\nabla u\mathrm{d}x\mathrm{d}y,\\
&I_{4}=-\int_{\mathbb{R}^{2}}[\gamma u_{x}\nabla u_{x}+\gamma u_{xx}\nabla u-\frac{1}{2}g^{\prime}(u)\nabla u+\gamma u\nabla u\\
&\qquad\qquad\quad+G\ast(\frac{1}{2}g^{\prime}(u)\nabla u-\gamma u\nabla u+\gamma u_{x}\nabla u_{x})]\cdot\nabla u_{x}\mathrm{d}x\mathrm{d}y,\\
&I_{5}=-\int_{\mathbb{R}^{2}}[2\gamma u_{x}\nabla u_{xx}+2\gamma u_{xx}\nabla u_{x}+\gamma u_{xxx}\nabla u\\
&\qquad\quad\qquad-\frac{1}{2}(g^{\prime\prime}(u)u_{x}\nabla u+g^{\prime}(u)\nabla u_{x})+\gamma u_{x}\nabla u+\gamma u\nabla u_{x}\\
&\qquad\quad\qquad+G_{x}\ast(\frac{1}{2}g^{\prime}(u)\nabla u-\gamma u\nabla u
+\gamma u_{x}\nabla u_{x})]\cdot\nabla u_{xx}\mathrm{d}x\mathrm{d}y,\\
&I_{6}=-\int_{\mathbb{R}^{2}}G\ast\nabla v_{y}\cdot\nabla udxdy-\int_{\mathbb{R}^{2}}G_{x}\ast\nabla v_{y}\cdot\nabla u_{x}\mathrm{d}x\mathrm{d}y,\\
&I_{7}=-\int_{\mathbb{R}^{2}}G_{xx}\ast\nabla v_{y}\cdot\nabla u_{xx}\mathrm{d}x\mathrm{d}y.
\end{align*}
One directly calculates to show that
$$I_6=I_7=0,\quad |I_{1}|\leq C(\gamma)\| \nabla u\|_{L^{\infty}}(\|\nabla u\|_{L^{2}}^{2}+\|\nabla u_{x}\|_{L^{2}}^{2}),
\quad |I_{2}|\leq C(\gamma)\|\nabla u\|_{L^{\infty}}\|\nabla u_{xx}\|_{L^{2}}^{2}.$$
By Lemma \ref{L2.5}, we obtain
\begin{align*}
| I_{3}|&
\leq C(\gamma,g^{\prime},\| u\|_{L^{\infty}})\Bigl((1+\|u\|_{L^{\infty}})\|\nabla u\|_{L^{2}}^{2}
+\|\nabla u\|_{L^{\infty}}\big(\|\nabla u\|_{L^{2}}^{2}
+\|\nabla u_{x}\|_{L^{2}}^{2}\big)\Bigr),\\
| I_{4}|&
\leq C(\gamma,g^{\prime},\| u\|_{L^{\infty}})\Bigl((1+\|u\|_{L^{\infty}})(\|\nabla u\|_{L^{2}}^{2}+\|\nabla u_{x}\|_{L^{2}}^{2})
+\|\nabla u\|_{L^{\infty}}\|\nabla u_{x}\|_{L^{2}}^{2}\Bigr).
\end{align*}
By the H\"{o}lder inequality, Lemma \ref{L2.5}, and
\begin{equation}\label{eq:42}
\begin{split}
2\gamma \| u_{xx}\|_{L_{x}^{2}L_{y}^{\infty}}\|\nabla u_{x}\|_{L_{x}^{\infty}L_{y}^{2}}
\leq&C(\gamma)\| u_{xx}\|_{L^{2}}^{\frac{1}{2}}\| u_{xxy}\|_{L^{2}}^{\frac{1}{2}}\| \nabla u_{x}\|_{L^{2}}^{\frac{1}{2}}\| \nabla u_{xx}\|_{L^{2}}^{\frac{1}{2}}\\
\leq&C(\gamma)\| \nabla u_{x}\|_{L^{2}}\| \nabla u_{xx}\|_{L^{2}},
\end{split}
\end{equation}
we conclude that
\begin{equation}\label{eq:43}
\begin{split}
| I_{5}|
&\leq C(\gamma,g^{\prime},g^{\prime\prime},\|u\|_{L^{\infty}})\Bigl((1+\|u\|_{L^{\infty}})(1+\|\nabla u\|_{L^{\infty}})+\|\nabla u_{x}\|_{L^{2}}\Bigr)\\
&\quad\times(\|\nabla u\|_{L^2}^2+\|\nabla u_{x}\|_{L^2}^2+\|\nabla u_{xx}\|_{L^2}^2).
\end{split}
\end{equation}
Substituting the various estimates from $I_1$ to $I_7$ into \eqref{eq:40} and \eqref{eq:41}, it  implies
\begin{align}
&\quad\frac{\mathrm{d}}{\mathrm{d}t}(\|\nabla u\|_{L^{2}}^{2}+\|\nabla u_{x}\|_{L^{2}}^{2})\nonumber\\
&\leq C(\gamma,g^{\prime},\|u\|_{L^{\infty}})\big(1+\|\nabla u\|_{L^{\infty}}
+\|u\|_{L^{\infty}}\big)\big(\|\nabla u\|_{L^2}^2+\|\nabla u_{x}\|_{L^2}^2\big),\label{eq:44}\\
&\quad\frac{\mathrm{d}}{\mathrm{d}t}(\|\nabla u\|_{L^{2}}^{2}+\|\nabla u_{x}\|_{L^{2}}^{2}+\|\nabla u_{xx}\|_{L^{2}}^{2})\nonumber\\
&\leq C(\gamma,g^{\prime},g^{\prime\prime},\|u\|_{L^{\infty}})\Bigl(\|\nabla u_{x}\|_{L^{2}}
+(1+\|u\|_{L^{\infty}})(1+\|\nabla u\|_{L^{\infty}})\Bigr)\times(\|\nabla u\|_{L^2}^2+\|\nabla u_{x}\|_{L^2}^2+\|\nabla u_{xx}\|_{L^2}^2).\label{eq:45}
\end{align}
Now we estimate $\|\nabla u_{y}\|_{L^{\infty}}$ and $\| u_{xxy}\|_{L^{\infty}}$ in the inequalities \eqref{eq:35} to \eqref{eq:36}. Applying the operators $\partial_{y}^{2}$ and $\partial_{x}\partial_{y}^{2}$ to the first equation of system \eqref{eq:3}, respectively, it yields
\begin{align}
\partial_{t}u_{yy}&+\gamma u\partial_{x}u_{yy}+2\gamma u_{y}u_{xy}+\gamma u_{x}u_{yy}+G_{x}\ast(\frac{1}{2}g^{\prime\prime}(u)u_{y}^{2}+
\frac{1}{2}g^{\prime}(u)u_{yy}\nonumber\\
&\quad+\gamma u_{xy}^{2}+\gamma u_{x}u_{xyy}-\gamma u_{y}^{2}-\gamma uu_{yy})+G\ast v_{yyy}=0,\label{eq:46}\\
\partial_{t}u_{xyy}&+\gamma u\partial_{x}u_{xyy}+\gamma u_{x}u_{xyy}+\gamma u_{xy}^{2}+2\gamma u_{y}u_{xxy}+\gamma u_{xx}u_{yy}
-\frac{1}{2}g^{\prime\prime}(u)u_{y}^{2}\nonumber\\
&\quad-\frac{1}{2}g^{\prime}(u)u_{yy}+\gamma u_{y}^{2}+\gamma uu_{yy}+G\ast(\frac{1}{2}g^{\prime\prime}(u)u_{y}^{2}+\frac{1}{2}g^{\prime}(u)u_{yy}+\gamma u_{xy}^{2}\nonumber\\
&\quad+\gamma u_{x}u_{xyy}-\gamma u_{y}^{2}-\gamma uu_{yy})+G_{x}\ast v_{yyy}=0.\label{eq:47}
\end{align}
Taking the $L^2$ inner products of \eqref{eq:46} and \eqref{eq:47} with $u_{yy}$ and $u_{xyy}$, respectively, over the whole space, and adding the resulting equations, we obtain
\begin{equation}\label{eq:48}
\frac{1}{2}\frac{\mathrm{d}}{\mathrm{d}t}(\| u_{yy}\|_{L^{2}}^{2}+\| u_{xyy}\|_{L^{2}}^{2})=\sum_{i=1}^{4}J_{i},
\end{equation}
where
\begin{align*}
&J_{1}=-\gamma\int_{\mathbb{R}^{2}}(uu_{xyy}u_{yy}+uu_{xxyy}u_{xyy})\mathrm{d}x\mathrm{d}y,\\
&J_{2}=-\int_{\mathbb{R}^{2}}\big\{(2\gamma u_{y}u_{xy}+\gamma u_{x}u_{yy})u_{yy}
+[\gamma u_{x}u_{xyy}+\gamma u_{xy}^{2}+2\gamma u_{y}u_{xxy}+\gamma u_{xx}u_{yy}\\
&\qquad\qquad\quad-\frac{1}{2}g^{\prime\prime}(u)u_{y}^{2}-\frac{1}{2}g^{\prime}(u)u_{yy}+\gamma u_{y}^{2}+\gamma uu_{yy}]u_{xyy}\big\}\mathrm{d}x\mathrm{d}y,\\
&J_{3}=-\int_{\mathbb{R}^{2}}\big[G_{x}\ast(\frac{1}{2}g^{\prime\prime}(u)u_{y}^{2}+\frac{1}{2}g^{\prime}(u)u_{yy}+\gamma u_{xy}^{2}+\gamma u_{x}u_{xyy}-\gamma u_{y}^{2}-\gamma uu_{yy})u_{yy}\\
&\qquad+G\ast(\frac{1}{2}g^{\prime\prime}(u)u_{y}^{2}+\frac{1}{2}g^{\prime}(u)u_{yy}+\gamma u_{xy}^{2}+\gamma u_{x}u_{xyy}-\gamma u_{y}^{2}-\gamma uu_{yy})u_{xyy}\big]\mathrm{d}x\mathrm{d}y,\\
&J_{4}=-\int_{\mathbb{R}^{2}}(G\ast v_{yyy}u_{yy}+G_{x}\ast v_{yyy}u_{xyy})\mathrm{d}x\mathrm{d}y.
\end{align*}
Direct calculations show that
$$J_3=J_4=0,\quad |J_{1}|\leq C(\gamma)\| \nabla u\|_{L^{\infty}}(\| u_{yy}\|_{L^{2}}^{2}+\| u_{xyy}\|_{L^{2}}^{2}).$$
Combining the H\"{o}lder's inequality with the Young's inequality, we obtain
\begin{equation}\label{eq:49}
\begin{split}
|\int_{\mathbb{R}^{2}}\gamma u_{xy}^{2}u_{xyy}\mathrm{d}x\mathrm{d}y|
&\leq C(\gamma)\| u_{xy}\|_{L_{x}^{2}L_{y}^{\infty}}\| u_{xy}\|_{L_{x}^{\infty}L_{y}^{2}}\|u_{xyy}\|_{L^{2}}\\
&\leq C(\gamma)\| u_{xy}\|_{L^{2}}^{\frac{1}{2}}\| u_{xyy}\|_{L^{2}}^{\frac{1}{2}}\| u_{xy}\|_{L^{2}}^{\frac{1}{2}}
\| u_{xxy}\|_{L^{2}}^{\frac{1}{2}}\|u_{xyy}\|_{L^{2}}\\
&\leq C(\gamma)\| u_{xy}\|_{L^{2}}\| u_{xyy}\|_{L^{2}}^{\frac{3}{2}}
\| u_{xxy}\|_{L^{2}}^{\frac{1}{2}}\\
&\leq C(\gamma)\|\nabla u_{x}\|_{L^{2}}(\| u_{xyy}\|_{L^{2}}^{2}+
\| u_{xxy}\|_{L^{2}}^{2}),
\end{split}
\end{equation}
\begin{equation}\label{eq:50}
\begin{split}
|\int_{\mathbb{R}^{2}}\gamma u_{xx}u_{yy}u_{xyy}\mathrm{d}x\mathrm{d}y| \leq C(\gamma)\|(\nabla u_{x},\nabla u_{xx})\|_{L^{2}}\|(u_{yy},u_{xyy})\|_{L^{2}}^{2}.
\end{split}
\end{equation}
Using the same estimation strategy as in \eqref{eq:49} together with Lemma \ref{L2.5}, we conclude that
\begin{align}
\left|\int_{\mathbb{R}^{2}}\frac{1}{2}g^{\prime\prime}(u)u_{y}^{2}u_{xyy}\mathrm{d}x\mathrm{d}y\right|  &\leq C\| g^{\prime\prime}(u)\|_{L^{\infty}}\| u_{y}\|_{L^{2}}\| u_{yy}\|_{L^{2}}^{\frac{1}{2}}\| u_{xy}\|_{L^{2}}^{\frac{1}{2}}\| u_{xyy}\|_{L^{2}}\nonumber\\
&\leq C(g^{\prime\prime},\| u\|_{L^{\infty}})(1+\| u\|_{L^{\infty}})\|\nabla u\|_{L^{2}}\| (u_{yy},\nabla u_{x},u_{xyy})\|_{L^{2}}^{2},\label{eq:51}\\
\left|\int_{\mathbb{R}^{2}}\frac{1}{2}g^{\prime}(u)u_{yy}u_{xyy}\mathrm{d}x\mathrm{d}y\right|
&\leq C(g^{\prime},\| u\|_{L^{\infty}})(1+\| u\|_{L^{\infty}})(\| u_{yy}\|_{L^{2}}^{2}+\| u_{xyy}\|_{L^{2}}^{2}).\label{eq:52}
\end{align}
Thus we have
\begin{align*}
|J_{2}|\leq C(\gamma,g^{\prime},g^{\prime\prime},\| u\|_{L^{\infty}})&\Big\{\Bigl(\big(1+\| u\|_{L^{\infty}}\big)\|\nabla u\|_{L^2}+1+\|(\nabla u_{x},\nabla u_{xx})\|_{L^2}+\| \nabla u\|_{L^{\infty}}+\| u\|_{L^{\infty}}\Bigr)\big(\| u_{yy}\|_{L^2}^2+\| u_{xyy}\|_{L^2}^2\big)\\
&+(\| \nabla u_{x}\|_{L^{2}}+\| \nabla u\|_{L^{\infty}})
\| \nabla u_{xx}\|_{L^{2}}^2+(1+ \| u\|_{L^{\infty}})\|\nabla u\|_{L^2}\| \nabla u_{x}\|_{L^{2}}^2\Big\}.
\end{align*}
Substituting the all estimates into equation \eqref{eq:48} gives
\begin{equation}\label{eq:53}
\begin{split}
&\frac{\mathrm{d}}{\mathrm{d}t}(\| u_{yy}\|_{L^{2}}^{2}+\| u_{xyy}\|_{L^{2}}^{2})\leq C(\gamma,g^{\prime},g^{\prime\prime},\| u\|_{L^{\infty}})\big\{\big[\big(1+\| u\|_{L^{\infty}}\big)\|\nabla u\|_{L^2}+\| (\nabla u,u)\|_{L^{\infty}}\\
&\quad+\|(\nabla u_{x},\nabla u_{xx})\|_{L^2}+1\big]\big(\| u_{yy}\|_{L^2}^2
+\| u_{xyy}\|_{L^2}^2\big)+(1+ \| u\|_{L^{\infty}})\|\nabla u\|_{L^2}\| \nabla u_{x}\|_{L^{2}}^2\\
&\quad
+(\| \nabla u_{x}\|_{L^{2}}+\| \nabla u\|_{L^{\infty}})\| \nabla u_{xx}\|_{L^{2}}^2\big\}.
\end{split}
\end{equation}
From the systems of equations \eqref{eq:27}, \eqref{eq:30}, \eqref{eq:45}, \eqref{eq:53}, we can derive
\begin{equation}\label{eq:54}
\begin{split}
\frac{\mathrm{d}}{\mathrm{d}t}(\| (u,u_{x})\|_{H^{1}}^{2}&+\|(\nabla u_{xx},u_{yy},u_{xyy})\|_{L^{2}}^{2}
+\| (\partial_{x}^{-1}u,u)\|_{H^{2}}^{2})\\
&\leq C(\gamma,g^{\prime},g^{\prime\prime},\|u\|_{L^{\infty}})\Bigl(\big(\|\nabla u\|_{L^{\infty}}+\|\nabla u\|_{L^{2}}+1\big)\big(1+\|u\|_{L^{\infty}}\big)\\
&\qquad\qquad\qquad\quad+\|\nabla u_{x}\|_{L^{2}}+\|\nabla u_{xx}\|_{L^{2}}\Bigr)
\|u\|_{X^{2}}^2.
\end{split}
\end{equation}
On the one hand, when $s>2$, we have
\begin{equation}\label{eq:55}
\frac{1}{2}\| u\|_{X^{s}}^{2}\leq2\| u\|_{H^{s}}^{2}+\| u_{x}\|_{H^{s}}^{2}+\| \partial_{x}^{-1}u\|_{H^{s}}^{2}\leq2\| u\|_{X^{s}}^{2}.
\end{equation}
Then combining equation \eqref{eq:55} with equation \eqref{eq:34}, it implies
\begin{equation}\label{eq:56}
\frac{\mathrm{d}}{\mathrm{d}t}(\| u\|_{X^{s}}^{2})\leq C(\gamma,g^{\prime},M)(\|u\|_{X^{s}}+1)\|u\|_{X^{s}}^2.
\end{equation}
According to Gr\"{o}nwall's inequality, we obtain
\begin{equation}\label{eq:57}
\sup_{\tau\in[0,t]}\| u\|_{X^{s}}^{2}\leq\| u_{0}\|_{X^{s}}^{2}e^{C(\gamma,g^{\prime},M)\int_{0}^{t}(\|u\|_{X^{s}}+1)(\tau)\mathrm{d}\tau}.
\end{equation}
Let \(T > 0\) satisfy the inequality \(2C(\gamma, g', M) \left( \|u_0\|_{X^s} + 1 \right) T \leq 1\). Then, by the bootstrap method, we have
\begin{equation}\label{eq:58}
\sup_{\tau\in[0,t]}\| u(\tau)\|_{X^{s}}\leq 2\| u_{0}\|_{X^{s}},\ \forall\ t\in [0,T].
\end{equation}
For  $T_{u_0}^*$, choose a constant $M>0$, such that $M>C(T_{u_0})\|u_0\|_{X^s}\geq C(T_{u_0})\sup_{\tau\in[0,T_{u_0}]}\| u(\tau)\|_{X^{s}} \geq \|u\|_{L^{\infty}}$.

On the other hand, when $s=2$, we have
\begin{equation}\label{eq:59}
\begin{split}
\frac{1}{2}\| u\|_{X^{s}}^{2}&\leq\| (u,u_{x})\|_{H^{1}}^{2}+\|(\nabla u_{xx},u_{yy},u_{xyy})\|_{L^{2}}^{2}
+\| (\partial_{x}^{-1}u,u)\|_{H^{2}}^{2}\leq2\| u\|_{X^{s}}^{2}.
\end{split}
\end{equation}
Thus, combining the inequalities \eqref{eq:59} and \eqref{eq:54}, one readily yields
\begin{equation}\label{eq:60}
\begin{aligned}
\frac{\mathrm{d}}{\mathrm{d}t}(\| u\|_{X^{s}}^{2})\leq C(\gamma,g^{\prime},g^{\prime\prime},M)\big[&(\|\nabla u\|_{L^{\infty}}+\|\nabla u\|_{L^{2}}+1)(1+\|u\|_{L^{\infty}})\\
&+\|\nabla u_{x}\|_{L^{2}}+\|\nabla u_{xx}\|_{L^{2}}\big]\| u\|_{X^{s}}^{2}.
\end{aligned}
\end{equation}
Applying  Gr\"{o}nwall's inequality, we obtain
\begin{equation}\label{eq:61}
\begin{split}
\sup_{\tau\in[0,t]}\| u\|_{X^{s}}^{2}&\leq\| u_{0}\|_{X^{s}}^{2}\exp\bigg\{C(\gamma,g^{\prime},g^{\prime\prime},M)\int_{0}^{t}
\bigg[\|(\nabla u_{x},\nabla u_{xx})\|_{L^{2}}\\
&\quad\quad\qquad +(\|\nabla u\|_{L^{\infty}}+\|\nabla u\|_{L^{2}}+1)(1+\|u\|_{L^{\infty}})\bigg](\tau)\mathrm{d}\tau\bigg\}.
\end{split}
\end{equation}
Choose $T > 0$ such that
\begin{equation*}
\begin{split}
2C(\gamma,g^{\prime},g^{\prime\prime},M)\int_{0}^{T}\big[&(\|\nabla u\|_{L^{\infty}}
+\|\nabla u\|_{L^{2}}+1)(1+\|u\|_{L^{\infty}})\\
&+\|\nabla u_{x}\|_{L^{2}}+\|\nabla u_{xx}\|_{L^{2}}\big](\tau)\mathrm{d}\tau\leq1.
\end{split}
\end{equation*}
Thus, by the bootstrap method, the inequality \eqref{eq:58} remains valid. This completes the proof of the energy estimate.

We now proceed to prove the blow-up criterion. From Lemma \ref{C2.1} and equation \eqref{eq:27}, it follows that for $s \geq 2$,
\begin{equation}\label{eq:62}
\begin{aligned}
\|u\|_{L^\infty} &\leq C\left(\|u\|_{L^2} + \|u_x\|_{L^2} + \|u_y\|_{L^\infty}\right) \\
&= C\left(\|u_0\|_{L^2} + \|u_{0x}\|_{L^2} + \|u_y\|_{L^\infty}\right) \\
&\leq C\left(\|u_0\|_{L^2},\|u_{0x}\|_{L^2}\right)\|\nabla u\|_{L^\infty}.
\end{aligned}
\end{equation}
In the following, we divide the discussion of the blow-up criterion into two cases: $s > 2$ and $s = 2$.

\textbf{Case 1:} $s>2$. From $H^s(\mathbb{R}^2) \hookrightarrow L^\infty(\mathbb{R}^2)$ and \eqref{eq:58}, it follows that when $s > 2$, for any $t \in [0,T^*_{u_0})$, we have
\begin{align}\label{eq:63}
\|u_{xx}\|_{L^\infty} \leq \|u\|_{X^s} \leq 2\|u_0\|_{X^s}.
\end{align}
Substituting estimates \eqref{eq:62} and \eqref{eq:63} into the inequality \eqref{eq:34} together with the definition of the $X^s$ norm, we obtain
\begin{align}\label{eq:64}
\frac{\mathrm{d}}{\mathrm{d}t}\|u\|_{X^s}^2
\leq C(\gamma, g',M)\left(\|u_0\|_{L^2} + \|u_{0x}\|_{L^2} + 2 + 2\|\nabla u\|_{L^\infty}+2\|u_0\|_{X^s}\right)\|u\|_{X^s}^2.
\end{align}
By the Gr\"{o}nwall's inequality, we obtain
\begin{equation}\label{eq:65}
\begin{aligned}
\|u\|_{X^s}^2 &\leq \|u_0\|_{X^s}^2 \exp\Bigg\{ C(\gamma, g', M) \\
&\quad \times \left[ ( \|u_0\|_{L^2} + \|u_{0x}\|_{L^2} + 2 + 2\|u_0\|_{X^s} ) t + 2 \int_0^t \|\nabla u\|_{L^\infty} \mathrm{d}\tau \right] \Bigg\}.
\end{aligned}
\end{equation}
Since $\int_0^{T_{u_0}^*} \|\nabla u(\tau)\|_{L^\infty(\mathbb{R}^2)} \mathrm{d}\tau$ is bounded, take
\begin{equation}\label{eq:66}
\begin{aligned}
M_4(T_{u_0}^*)=& \|u_0\|_{X^s}^2 \exp\Bigg\{ C(\gamma, g',M)\\
& \times \left[\left(\|u_0\|_{L^2} + \|u_{0x}\|_{L^2} + 2 + 2\|u_0\|_{X^s}\right)T_{u_0}^* + 2\int_0^{T_{u_0}^*} \|\nabla u\|_{L^\infty} \mathrm{d}\tau\right] \Bigg\}.
\end{aligned}
\end{equation}
Then, for all \(t \in [0, T_{u_0}^*)\), we have
\begin{equation}\label{eq:67}
\|u\|_{X^s}^2 \leq M_4(T_{u_0}^*).
\end{equation}
Then the norm of $u$ in the $X^s$ space is uniformly bounded before the global maximum existence time $T_{u_0}^*$.

Choose a sufficiently small $T_1$ such that $T_{u_0}^* - T_1/2 \geq 0$. Let $t_0 = T_{u_0}^* - T_1/2$, where $T_1$ is the local existence time associated with the initial value $u(t_0)$ given by the local existence theorem for solutions. Define \(\tilde{v}(t) = u(t_0 + t)\) for \(t \in [0, T_1/2]\). Since \(t_0 + t \leq t_0 + T_1/2 = T_{u_0}^*\), the solution $\tilde{v}(t)$ is well-defined on $[0,T_1/2]$. As  system \eqref{eq:3} is invariant under time translation: if $u(t,x,y)$ is a solution to the generalized CH-KP equation \eqref{GCHKP}, then $u(t+t_0,x,y)$ is also a solution. Thus  $\tilde{v}(t)$ solves the system \eqref{eq:3} with initial condition $\tilde{v}(0) = u(t_0)$.

By the local existence theorem for solutions, given an initial value $u(t_0) \in X^s$, system \eqref{eq:3} admits a unique solution $\tilde{u}(t)$ on $[0,T_1]$ such that $$\tilde{u} \in C([0,T_1]; X^s(\mathbb{R}^2)) \cap C^1([0,T_1];X^{s-2}(\mathbb{R}^2)).$$  On the other hand, $\tilde{v}(t)$ is also a solution to system \eqref{eq:3} on $[0,T_1/2]$ that satisfies the same initial condition  $\tilde{v}(0) = u(t_0)$. Due to the uniqueness of the solution,  we have $$\tilde{u}(t) = \tilde{v}(t) = u(t_0 + t), \forall t \in [0,T_1/2].$$
At $t = T_1/2$, we find that $$\tilde{u}(T_1/2) = u(t_0 + T_1/2) = u(T_{u_0}^* - T_1/2 + T_1/2) = u(T_{u_0}^*).$$ This implies that the value of the solution $u$ at $t = T_{u_0}^*$ is given by $\tilde{u}$. Since $\tilde{u}$ is defined on the larger interval $[0,T_1]$,  it is  an extension of $u$ beyond $T_{u_0}^*$: $t_0 + T_1 = T_{u_0}^* - T_1/2 + T_1 = T_{u_0}^* + T_1/2 > T_{u_0}^*$, This contradicts the maximality of \(T_{u_0}^*\). Consequently, the initial hypothesis must be false, i.e.,
\[\int_0^{T_{u_0}^*} \|\nabla u(\tau)\|_{L^\infty(\mathbb{R}^2)}^2 \mathrm{d}\tau=\infty.\]

\textbf{Case 2:}  $s = 2.$ Substituting inequality \eqref{eq:62} into inequality \eqref{eq:44}, we have got
\begin{equation}\label{eq:68}
\begin{aligned}
&\frac{\mathrm{d}}{\mathrm{d}t}\left(\|\nabla u\|_{L^2}^2 + \|\nabla u_x\|_{L^2}^2\right)\\
&\quad\leq C(\gamma, g',M)\left(1+\|u_0\|_{L^2}+\|u_{0x}\|_{L^2}+2\|\nabla u\|_{L^\infty}\right)\\
&\qquad\times \left(\|\nabla u\|_{L^2}^2 + \|\nabla u_x\|_{L^2}^2\right).
\end{aligned}
\end{equation}
Applying Gr\"{o}nwall's inequality to inequality \eqref{eq:68}, one yields
\begin{equation}\label{eq:69}
\begin{aligned}
&\|\nabla u\|_{L^2}^2+ \|\nabla u_x\|_{L^2}^2 \leq \left(\|\nabla u_0\|_{L^2}^2 + \|\nabla u_{0x}\|_{L^2}^2\right)\times \\
& \quad\exp\left\{ C(\gamma, g',M) \left[(1+\|u_0\|_{L^2}+\|u_{0x}\|_{L^2})t + 2\int_0^t \|\nabla u\|_{L^\infty} \mathrm{d}\tau\right] \right\}.
\end{aligned}
\end{equation}
If $\int_0^{T_{u_0}^*} \|\nabla u(\tau)\|_{L^\infty(\mathbb{R}^2)}^2 \mathrm{d}\tau$ is bounded, then $\int_0^{T_{u_0}^*} \|\nabla u(\tau)\|_{L^\infty(\mathbb{R}^2)} \mathrm{d}\tau$ is also bounded. Then, for any $ t \in [0,T_{u_0}^*)$, we have
\begin{equation}\label{eq:70}
\|\nabla u\|_{L^2}^2 + \|\nabla u_x\|_{L^2}^2 \leq M_5(T_{u_0}^*),
\end{equation}
where
\begin{equation}\label{eq:71}
\begin{aligned}
M_5(T_{u_0}^*) &= \left(\|\nabla u_0\|_{L^2}^2 + \|\nabla u_{0x}\|_{L^2}^2\right) \exp\bigg\{ C(\gamma, g',M) \\
&\quad  \times\bigg[(1+\|u_0\|_{L^2}+\|u_{0x}\|_{L^2})T_{u_0}^* + 2\int_0^{T_{u_0}^*} \|\nabla u\|_{L^\infty} \mathrm{d}\tau \bigg] \bigg\}.
\end{aligned}
\end{equation}
Thus we have
\begin{equation}\label{eq:72}
\|\nabla u\|_{L^2} \leq \sqrt{M_5(T_{u_0}^*)}.
\end{equation}

Substituting inequality \eqref{eq:62} into inequality \eqref{eq:45}, it implies
\begin{equation}\label{eq:73}
\begin{split}
&\frac{\dd}{\dd t}\left( \|\nabla u\|_{L^2}^2 + \|\nabla u_x\|_{L^2}^2 + \|\nabla u_{xx}\|_{L^2}^2 \right)\\
&\quad\leq C\left( \gamma, g', g'', M \right)\big[ \|\nabla u_x\|_{L^2} \\
&\qquad + \left( 1 + \|u_0\|_{L^2} + \|u_{0x}\|_{L^2} + \|\nabla u\|_{L^\infty} \right) \left( 1 + \|\nabla u\|_{L^\infty} \right)\big] \\
&\qquad \times \left( \|\nabla u\|_{L^2}^2 + \|\nabla u_x\|_{L^2}^2 + \|\nabla u_{xx}\|_{L^2}^2 \right) \\
&\quad\leq C\left( \gamma, g', g'', M\right) \big[\|\nabla u_x\|_{L^2} + 1 + \|u_0\|_{L^2} + \|u_{0x}\|_{L^2} \\
&\qquad + \left( 2 + \|u_0\|_{L^2} + \|u_{0x}\|_{L^2} \right) \|\nabla u\|_{L^\infty} + \|\nabla u\|_{L^\infty}^2 \big]\\
&\qquad \times \left( \|\nabla u\|_{L^2}^2 + \|\nabla u_x\|_{L^2}^2 + \|\nabla u_{xx}\|_{L^2}^2 \right).
\end{split}
\end{equation}
From inequality \eqref{eq:72},  inequality \eqref{eq:73} arrives
\begin{equation}\label{eq:74}
\begin{split}
&\frac{\dd}{\dd t}\left( \|\nabla u\|_{L^2}^2 + \|\nabla u_x\|_{L^2}^2 + \|\nabla u_{xx}\|_{L^2}^2 \right)\\
&\quad\leq C\left( \gamma, g', g'', M \right) \bigg[ \sqrt{M_5(T_{u_0}^*)}+ \|u_0\|_{L^2} + \|u_{0x}\|_{L^2}  \\
&\qquad  + 1 +  \left( 2 + \|u_0\|_{L^2} + \|u_{0x}\|_{L^2} \right) \|\nabla u\|_{L^\infty} + \|\nabla u\|_{L^\infty}^2 \bigg] \\
&\qquad \times \left( \|\nabla u\|_{L^2}^2 + \|\nabla u_x\|_{L^2}^2 + \|\nabla u_{xx}\|_{L^2}^2 \right).
\end{split}
\end{equation}
Applying G\"{o}nwall's inequality to inequality \eqref{eq:74}, we yield
\begin{equation}\label{eq:75}
\begin{split}
&\|\nabla u\|_{L^2}^2 + \|\nabla u_x\|_{L^2}^2 + \|\nabla u_{xx}\|_{L^2}^2\\
&\quad\leq \left( \|\nabla u_0\|_{L^2}^2 + \|\nabla u_{0x}\|_{L^2}^2 + \|\nabla u_{0xx}\|_{L^2}^2 \right) \times\\
&\qquad\exp\left\{C\left( \gamma, g', g'', M \right) \left[ \left( 2 + \|u_0\|_{L^2} + \|u_{0x}\|_{L^2} \right) \int_0^t \|\nabla u\|_{L^\infty} \dd\tau \right. \right. \\
&\qquad+ \left. \left. \int_0^t \|\nabla u\|_{L^\infty}^2 \dd\tau + \left( \sqrt{M_5(T_{u_0}^*)} + 1 + \|u_0\|_{L^2} + \|u_{0x}\|_{L^2} \right)t \right] \right\}.
\end{split}
\end{equation}
Then, for any \( t \in [0, T_{u_0}^*)\), we have
\begin{equation}\label{eq:76}
\|\nabla u\|_{L^2}^2 + \|\nabla u_x\|_{L^2}^2 + \|\nabla u_{xx}\|_{L^2}^2 \leq M_6(T_{u_0}^*),
\end{equation}
where
\begin{equation}\label{eq:77}
\begin{split}
M_6(T_{u_0}^*) &= \left( \|\nabla u_0\|_{L^2}^2 + \|\nabla u_{0x}\|_{L^2}^2 + \|\nabla u_{0xx}\|_{L^2}^2 \right) \\
&\quad \times \exp\left\{ C\left( \gamma, g', g'', M \right)  \left[ \left( 2 + \|u_0\|_{L^2} + \|u_{0x}\|_{L^2} \right) \int_0^{T_{u_0}^*} \|\nabla u\|_{L^\infty} \dd\tau \right. \right. \\
&\quad + \left. \left. \int_0^{T_{u_0}^*} \|\nabla u\|_{L^\infty}^2 \dd\tau + \left( \sqrt{M_5(T_{u_0}^*)} + 1 + \|u_0\|_{L^2} + \|u_{0x}\|_{L^2} \right) T_{u_0}^* \right] \right\}.
\end{split}
\end{equation}
Thus we have
\begin{equation}\label{eq:78}
\begin{split}
\|\nabla u\|_{L^2} \leq \sqrt{M_6(T_{u_0}^*)},\
\|\nabla u_x\|_{L^2} \leq \sqrt{M_6(T_{u_0}^*)},\
\|\nabla u_{xx}\|_{L^2} \leq \sqrt{M_6(T_{u_0}^*)}.
\end{split}
\end{equation}

Substituting inequalities \eqref{eq:62} and \eqref{eq:78} into inequality \eqref{eq:54},  we obtain
\begin{equation*}\label{eq:79}
\begin{split}
\frac{\dd}{\dd t} \|u\|_{X^s}^2
&\leq C\left( \gamma, g', g'', M \right) \bigg[ 2\sqrt{M_6(T_{u_0}^*)} + \left( \|\nabla u\|_{L^\infty} + \sqrt{M_6(T_{u_0}^*)} + 1 \right)  \\
&\quad \times  \left( \|\nabla u\|_{L^\infty} + 1 + \|u_0\|_{L^2} + \|u_{0x}\|_{L^2} \right) \bigg] \|u\|_{X^s}^2.
\end{split}
\end{equation*}
Thus, we have
\begin{equation*}\label{eq:80}
\begin{split}
\|u\|_{X^s}^2
&\leq \|u_0\|_{X^s}^2 \exp\bigg\{ C\left( \gamma, g', g'', M \right)  \bigg[ \int_0^t \|\nabla u\|_{L^\infty}^2 \dd\tau  \\
&\quad + \left( \sqrt{M_6(T_{u_0}^*)} + \|u_0\|_{L^2} + \|u_{0x}\|_{L^2} + 2 \right) \int_0^t \|\nabla u\|_{L^\infty} \dd\tau \\
&\quad +   \left( \left( \sqrt{M_6(T_{u_0}^*)} + 1 \right) \left( \|u_0\|_{L^2} + \|u_{0x}\|_{L^2} + 1 \right) + 2\sqrt{M_6(T_{u_0}^*)} \right) t \bigg] \bigg\}.
\end{split}
\end{equation*}
This shows that for any \( t \in [0, T_{u_0}^*) \),
\begin{equation*}\label{eq:81}
\|u\|_{X^s}^2 \leq M_7(T_{u_0}^*),
\end{equation*}
where
\begin{equation}\label{eq:82}
\begin{split}
M_7(T_{u_0}^*)
&= \|u_0\|_{X^s}^2 \exp\bigg\{ C\left( \gamma, g', g'', M \right)  \bigg[ \int_0^{T_{u_0}^*} \|\nabla u\|_{L^\infty}^2 d\tau   \\
&\quad + \left( \sqrt{M_6(T_{u_0}^*)} + \|u_0\|_{L^2} + \|u_{0x}\|_{L^2} + 2 \right) \int_0^{T_{u_0}^*} \|\nabla u\|_{L^\infty} d\tau \\
&\quad +   \left( \left( \sqrt{M_6(T_{u_0}^*)} + 1 \right) \left( \|u_0\|_{L^2} + \|u_{0x}\|_{L^2} + 1 \right) + 2\sqrt{M_6(T_{u_0}^*)} \right) T_{u_0}^* \bigg] \bigg\}.
\end{split}
\end{equation}
Then the norm of $u$ in the $X^s$ space is uniformly bounded before the global maximal existence time $T_{u_0}^*$.

Applying the same  argument for the solution extension as in \textbf{Case 1}, it can reach a contradiction. Thus, the assumption is invalid, i.e.,
\begin{equation*}\label{eq:83}
\int_0^{T_{u_0}^*} \|\nabla u(\tau)\|_{L^\infty(\mathbb{R}^2)}^2 \, \dd\tau = \infty.
\end{equation*}

Consequently, the conclusion of Theorem \ref{T1.1} holds for any $C^\infty$ function $g(u)$ with $g(0) = 0$. This completes the proof of Theorem \ref{T1.1}.

\section{Blow-up sufficient conditions of solution}\label{S3}
\subsection{The case of $g'\in L^\infty$}
  Since $g'\in L^\infty(\mathbb{R})$, there exists a constant $L>0$ such that $\|g'\|_{L^\infty}\le L.$

\textbf{The proof of Theorem \ref{T1.2}}

The proof  Theorem \ref{T1.2} proceeds in four steps: first, defining characteristic lines and associated functions; second, deriving  the corresponding equations along these lines; third, estimating the $L^\infty$ norm of the residual term; and finally, establishing the blow-up conclusion by the Riccati inequality.

\begin{proof}
\textbf{Step 1: Defining the trajectories and the functions along trajectories}

Choose initial values $(x_0, y_0)$ such that $\partial_x u_0(x_0, y_0) = m_0$. Fix $y_0$ and define the characteristic curve $x = q(t, x_0, y_0)$ by
\[\frac{\mathrm{d}q(t,x_0,y_0)}{\mathrm{d}t} = \gamma u(t,q(t,x_0,y_0),y_0),\ q(0,x_0,y_0)= x_0.\]
Along this characteristic line, we define
\[w(t) = \partial_x u(t,x(t), y_0).\]
Clearly, $w(0)=m_0$.

\textbf{Step 2: Deriving the evolution equation for the derivative along the characteristics}

We commence by rewriting the first equation of  system \eqref{eq:3} in the form:
\begin{equation}\label{eq:84}
\begin{split}
u_t + \gamma u u_x + G \ast \partial_x F + G \ast v_y = 0,\ F = \frac{1}{2} g(u) + \frac{\gamma}{2} u_x^2 - \frac{\gamma}{2} u^2.
\end{split}
\end{equation}
Differentiating \eqref{eq:84}  with respect to $x$ yields
\begin{equation}\label{eq:85}
\partial_x u_t + \gamma\left( u_x \partial_x u + u u_{xx} \right) + \partial_x \left( G \ast \partial_x F \right) + \partial_x \left( G \ast v_y \right) = 0.
\end{equation}
Substituting $w$ into  equation \eqref{eq:85}, this gives
\begin{equation}\label{eq:86}
\partial_t w + \gamma\left( w^2 + u w_x \right) + \partial_x \left( G \ast \partial_x F \right) + \partial_x \left( G \ast v_y \right) = 0.
\end{equation}

Employing the material derivative $\frac{\mathrm{D}}{\mathrm{D}t} = \partial_t  + \gamma u \cdot \partial_x $ along the characteristic lines, we derive the evolution equation
\begin{equation}\label{eq:87}
\frac{\mathrm{D}w}{\mathrm{D}t} = -\gamma w^2 - R(t),\quad w(0) = m_0,
\end{equation}
where the residual term is defined as
$$R(t)=\partial_x (G* \partial_x F) + \partial_x(G*v_y).$$

\textbf{Step 3: Estimating the residual term \( R(t) \)}

We decompose the residual term as $R(t)=R_1(t)+R_2(t)$,
where $R_1(t) = \partial_x \left( G \ast \partial_x F \right),\ R_2(t) = \partial_x \left( G \ast v_y \right)$.

\textbf{ Estimate of \( R_1 \)}.

From the condition \( \|g'\|_{L^{\infty}} \leq L \) and Lemma \ref{L2.4}, it follows that
\begin{equation}\label{eq:88}
\|g(u)\|_{H^s} \leq C\left(L, \|u\|_{L^\infty}\right) \|u\|_{H^s} \leq C\left(L, \|u\|_{X^s}\right) \|u\|_{X^s}.
\end{equation}
Combining this with Lemma \ref{L2.2}, we obtain
\begin{equation}\label{eq:89}
\|F\|_{H^s} = \left\| \frac{1}{2}g(u) + \frac{\gamma}{2}u_x^2 - \frac{\gamma}{2}u^2 \right\|_{H^s}
\leq C\left( L, \|u\|_{X^s}\right)\|u\|_{X^s}+C(\gamma)\|u\|_{X^s}^2.
\end{equation}
By Lemma \ref{L2.1}(c), we have $\| \partial_x (G \ast f)\|_{H^s(\mathbb{R}^2)} \leq \|f\|_{H^s(\mathbb{R}^2)} $. Combining this with \eqref{eq:89}, we get
\begin{equation}\label{eq:90}
\begin{aligned}
\|R_1\|_{L^\infty} &= \|\partial_x (G \ast \partial_x F)\|_{L^\infty} \leq \|\partial_x (G \ast \partial_x F)\|_{H^{s-1}} \\
&\leq  \|F\|_{H^s} \leq C\left( L, \|u\|_{X^s}\right)\|u\|_{X^s}+C(\gamma)\|u\|_{X^s}^2.
\end{aligned}
\end{equation}

\textbf{The estimate of \( R_2 \)}. From the relation $u_y=v_x$, we deduce $v_{y}=\partial^{-1}_x u_{yy}$. Applying first inequality of Lemma \ref{C2.1} $$\| u\|_{L^{\infty}}^2\leq C\| u\|_{L^{2}}^{\frac{1}{2}}\| u_{x}\|_{L^{2}}^{\frac{1}{2}}\| u_{y}\|_{L^2}^{\frac{1}{2}}\| u_{xy}\|_{L^2}^{\frac{1}{2}},$$ we obtain, for $s\ge3$,

\begin{equation*}
\begin{aligned}
\|v_y\|_{L^\infty}^2 &=\| \partial_x^{-1}u_{yy}\|_{L^{\infty}}^2 \\
&\leq  C\|\partial_x^{-1}u_{yy} \|_{L^{2}}^{\frac{1}{2}}\| \partial_x^{-1}u_{xyy}\|_{L^{2}}^{\frac{1}{2}}\| \partial_x^{-1}u_{yyy}\|_{L^2}^{\frac{1}{2}}\| \partial_x^{-1}u_{yyyx}\|_{L^2}^{\frac{1}{2}}\\
&\leq C\|\partial_x^{-1}u_{yy} \|_{L^{2}}^{\frac{1}{2}}\| \partial_x^{-1}u_{yyy}\|_{L^2}^{\frac{1}{2}}\|u_{yy}\|_{L^{2}}^{\frac{1}{2}} \|u_{yyy}\|_{L^2}^{\frac{1}{2}}\\
&\leq C\|\partial_x^{-1}u\|_{H^3}\|u\|_{H^3}\le C\|u\|^2_{X^3}\le C\|u\|_{X^s}^2.
\end{aligned}
\end{equation*}
 Combining the inequality above with Lemma \ref{L2.1}(a), we conclude that
\begin{equation}\label{eq:91}
\begin{aligned}
\|R_2\|_{L^\infty}=\| \partial_x (G * v_y) \|_{L^\infty} \leq C \|  v_y \|_{L^\infty}\leq C \| u\|_{X^s}.
\end{aligned}
\end{equation}

From \eqref{eq:25}, there exists \( T > 0 \) such that for any \( t \in [0,T]\), we have \( \sup\limits_{\tau \in [0,t]} \|u(\tau)\|_{X^s} \le 2\|u_0\|_{X^s}\). Combining this with \eqref{eq:90} and \eqref{eq:91}, we see that there exists a constant \( K = C\left(\gamma, L, \|u_0\|_{X^s}\right) >0\) such that for $t\in[0,T]$, we have
\begin{equation*}
|R(t)| \le \|R_1\|_{L^\infty} + \|R_2\|_{L^\infty} \le K.
\end{equation*}

\textbf{Step 4: Deriving the blow-up conclusion from the Riccati inequality}

Along the characteristic line, it holds that
\begin{equation}\label{eq:93}
\frac{\mathrm{D}w}{\mathrm{D}t} = -\gamma w^2 - R(t), \ |R(t)| \le K, \ w(0) = m_0.
\end{equation}
Thus
\begin{equation}\label{eq:94}
\frac{\mathrm{D}w}{\mathrm{D}t} \le -\gamma w^2 + K.
\end{equation}
Let \( \psi(t) = -w(t) \), so that $\psi(0)=-m_0$, and
\begin{equation}\label{eq:95}
\frac{\mathrm{D}\psi}{\mathrm{D}t} = -\frac{\mathrm{D}w}{\mathrm{D}t} \ge \gamma w^2 - K = \gamma \psi^2 - K.
\end{equation}

If \( w_0 =m_0< -\sqrt{\frac{K}{\gamma}} \), then $\psi(t)$  is monotonically increasing and \( \psi(t) > \sqrt{\frac{K}{\gamma}} \). Thus
\begin{equation}\label{eq:96}
\frac{\dd\psi}{\gamma \psi^2 - K} \ge \dd t.
\end{equation}
Simultaneously integrating both sides and simplifying yields
\begin{equation}\label{eq:97}
\psi(t) \ge \frac{2\sqrt{K/\gamma}}{1 - \frac{\psi_0 - \sqrt{K/\gamma}}{\psi_0 + \sqrt{K/\gamma}} \cdot e^{2\sqrt{K\gamma}\, t}} - \sqrt{K/\gamma}.
\end{equation}
Thus, as \( t \to \left( \frac{1}{2\sqrt{K\gamma}} \ln\left( \frac{\psi_0 + \sqrt{K/\gamma}}{\psi_0 - \sqrt{K/\gamma}} \right) \right)^- \), we have
\begin{equation}\label{eq:98}
\left( \frac{2\sqrt{K/\gamma}}{1 - \frac{\psi_0 - \sqrt{K/\gamma}}{\psi_0 + \sqrt{K/\gamma}} \cdot e^{2\sqrt{K\gamma}\, t}} - \sqrt{K/\gamma} \right) \to +\infty.
\end{equation}
Thus \( \psi(t) \to +\infty \), and consequently \( \omega(t) = \partial_x u(t) \to -\infty \), meaning the solution blows up. Let
\begin{align}\label{eq:99}
T^* = \frac{1}{2\sqrt{K\gamma}} \ln\left( \frac{\psi_0 + \sqrt{K/\gamma}}{\psi_0 - \sqrt{K/\gamma}} \right) = \frac{1}{2\sqrt{K\gamma}} \ln\left( \frac{\sqrt{\gamma}m_0 - \sqrt{K}}{\sqrt{\gamma}m_0 + \sqrt{K}} \right).
\end{align}
Thus, as \( t \to (T^*)^- \), \( \|\partial_x u(t)\|_{L^\infty} \to \infty \). The theorem is proved.
\end{proof}

\textbf{The proof of Theorem \ref{T1.3}}

In \cite{WKZ2022}, a weighted blow-up theorem was established for the specific cases $g(u)=u^3$ and $g(u)=u^4$, yielding a sufficient condition for finite-time blow-up of solutions to system \eqref{eq:3}, namely that the initial data satisfy a certain inequality. Theorem \ref{T1.3} of this paper shows that for a general smooth function $g(u)$, the finite-time blow-up of solutions to system \eqref{eq:3} holds if the initial velocity and $g$ satisfy the prepared conditions.

\begin{proof}
As demonstrated in the proof of Theorem 4 in \cite{WKZ2022}, after differentiating both sides of equation \eqref{GCHKP} with respect to \( x \), then multiplying by \(\varphi(y)\), integrating the resulting expression with respect to \( y \) over \(\mathbb{R}\)along the trajectory,  we have already obtained
\begin{multline}\label{eq:100}
\frac{\mathrm{d}}{\mathrm{d}t} \int_{\mathbb{R}} u_{x}\big(t, q(t, x, y), y\big)\varphi(y)\mathrm{d}y = \int_{\mathbb{R}} \big(u_{tx} + \gamma u_{xx} u\big)\varphi\mathrm{d}y= - \frac{\gamma}{2} \int_{\mathbb{R}} u_x^2\varphi\mathrm{d}y \\
+ \int_{\mathbb{R}} \left[ \frac{1}{2}g(u) - \frac{\gamma}{2} u^2 - G * \left( \frac{1}{2}g(u) + \frac{\gamma}{2} u_x^2 - \frac{\gamma}{2} u^2 \right) \right] \varphi\mathrm{d}y - \int_{\mathbb{R}} (G * u) \varphi''\mathrm{d}y.
\end{multline}
Let \( M_1(t) = \int_{\mathbb{R}} u_{x}\big(t, q(t, x, y), y\big)\varphi(y)\mathrm{d}y \), then we have
\begin{equation}\label{eq:101}
\begin{split}
\frac{\dd M_1(t)}{\dd t} &= -\frac{\gamma}{2} \int_{\mathbb{R}} u_x^2 \varphi \, dy - \int_{\mathbb{R}} G*u \varphi'' \, \dd y \\
&\quad + \int_{\mathbb{R}} \left[ \frac{1}{2} g(u) - \frac{\gamma}{2} u^2 - G* \left( \frac{1}{2} g(u) + \frac{\gamma}{2} u_x^2 - \frac{\gamma}{2} u^2 \right) \right] \varphi \, \dd y.
\end{split}
\end{equation}
The literature \cite{WKZ2022} has obtained the following estimation formulas:
\begin{align}
&-\int_{\mathbb{R}} \frac{\gamma}{2} u^2 \varphi \, \dd y\leq \frac{\gamma}{2} \| \varphi \|_{L^\infty} \| u \|_{L_x^{\infty}L_y^2}^2 \leq \frac{\gamma}{2} \| \varphi \|_{L^\infty} \| u \|_{L^2} \| u_x \|_{L^2} \leq \frac{\gamma}{2} E(u_0) \| \varphi \|_{L^\infty},\label{eq:102}\\
&-\frac{\gamma}{2} \int_{\mathbb{R}} u_x^2 \varphi \, \dd y \leq -\frac{\gamma}{2} \left( \int_{\mathbb{R}} u_x \varphi \, dy \right)^2 = -\frac{\gamma}{2} M_1^2(t),\label{eq:103}\\
&\int_{\mathbb{R}} G* \left( \frac{\gamma}{2} u_x^2 - \frac{\gamma}{2} u^2 \right) \varphi \, \dd y \leq \| G* \left( \frac{\gamma}{2} u_x^2 - \frac{\gamma}{2} u^2 \right) \|_{L^1} \| \varphi \|_{L^\infty} \leq \gamma E(u_0) \| \varphi \|_{L^\infty},\label{eq:104}\\
&\int_{\mathbb{R}} G*u \varphi'' \, \dd y \leq \| G*u \|_{L_x^{\infty}L_y^2} \| \varphi'' \|_{L^2} \leq \| u \|_{L^2}^{\frac{1}{2}} \| u_x \|_{L^2}^{\frac{1}{2}} \| \varphi'' \|_{L^2} \leq E^{\frac{1}{2}}(u_0) \| \varphi'' \|_{L^2},\label{eq:105}
\end{align}
where $E(u) = \frac{1}{2} \int_{\mathbb{R}} (u^2 + u_x^2) \, \dd y.$

In what follows, we present an alternative estimation procedure for the key term, which  differs from that in \cite{WKZ2022}. Applying H\"{o}lder's inequality, we find
\begin{equation}\label{eq:106}
\begin{split}
\int_{\mathbb{R}} \frac{1}{2} g(u)\varphi \, \dd y
&\leq \frac{1}{2} \|g(u)\|_{L^2} \|\varphi\|_{L^2} \\
&\leq \frac{1}{2} \|g(u)\|_{H^s} \|\varphi\|_{L^2} \\
&\leq C(L,\|u_0\|_{X^s}) \|u_0\|_{X^s} \|\varphi\|_{L^2}.
\end{split}
\end{equation}
Then, by the H\"{o}lder's inequality and the Young inequality for convolution, we obtain
\begin{equation}\label{eq:107}
\begin{split}
\int_{\mathbb{R}} G*\left(\frac{1}{2} g(u)\right)\varphi \, \dd y
&\leq \left\| G*\left(\frac{1}{2} g(u)\right) \right\|_{L^2} \|\varphi\|_{L^2} \\
&\leq \frac{1}{2} \|G\|_{L^1} \|g(u)\|_{L^2} \|\varphi\|_{L^2} \\
&\leq C(L,\|u_0\|_{X^s}) \|u_0\|_{X^s} \|\varphi\|_{L^2}.
\end{split}
\end{equation}

Substituting the inequalities \eqref{eq:102} through \eqref{eq:107} into equation \eqref{eq:101}, one observes
\begin{equation}\label{eq:108}
\begin{split}
\frac{\dd M_1(t)}{\dd t} &= \int_{\mathbb{R}} \left[ \frac{1}{2}g(u) - \frac{\gamma}{2}u^2 - G*\left( \frac{1}{2}g(u) + \frac{\gamma}{2}u_x^2 - \frac{\gamma}{2}u^2 \right) \right] \varphi \dd y \\
&\quad - \frac{\gamma}{2} \int_{\mathbb{R}} u_x^2 \varphi \dd y - \int_{\mathbb{R}} G*u \varphi'' \dd y \\
&\leq -\frac{\gamma}{2} M_1^2(t) + C(L,\|u_0\|_{X^s}) \|u_0\|_{X^s} \|\varphi\|_{L^2} + \frac{\gamma}{2}E(u_0) \|\varphi\|_{L^\infty} \\
&\quad + \gamma E(u_0) \|\varphi\|_{L^\infty} + E^{1/2}(u_0) \|\varphi''\|_{L^2} \\
&\leq -\frac{\gamma}{2} M_1^2(t) + C_3^2,
\end{split}
\end{equation}
where
\begin{equation}\label{eq:109}
C_3^2 = C(L,\|u_0\|_{X^s}) \|u_0\|_{X^s} + \frac{3}{2}\gamma E(u_0) \|\varphi\|_{L^\infty} + E^{1/2}(u_0) \|\varphi''\|_{L^2}.
\end{equation}

By the assumption \( M_1(0) < -\sqrt{\frac{2}{\gamma}} C_3 \). By continuity, it follows that for all \( t < T_0 \), we have
\( M_1(t) < -\sqrt{\frac{2}{\gamma}} C_3 \). Solving inequality \eqref{eq:108}, we know that
\begin{equation}\label{eq:110}
\frac{M_1(0) + \sqrt{\frac{2}{\gamma}} C_3}{M_1(0) - \sqrt{\frac{2}{\gamma}} C_3} e^{\sqrt{2\gamma}C_3\, t} - 1 \leq \frac{2 \sqrt{\frac{2}{\gamma}} C_3}{M_1(t) - \sqrt{\frac{2}{\gamma}} C_3} \leq 0.
\end{equation}
Therefore, there exists a time \( T_0 \) such that
\begin{equation}\label{eq:111}
\begin{split}
T_0 &< \frac{1}{\sqrt{2\gamma} \, C_3} \ln \frac{M_1(0) - \sqrt{\frac{2}{\gamma}} C_3}{M_1(0) + \sqrt{\frac{2}{\gamma}} C_3}\\
&=\frac{1}{\sqrt{2\gamma} C_3} \ln \frac{\sqrt{\gamma} \int_{\mathbb{R}} u_{0x}(x_0,y)\varphi(y)\mathrm{d}y - \sqrt{2}C_3}{\sqrt{\gamma} \int_{\mathbb{R}} u_{0x}(x_0,y)\varphi(y)\mathrm{d}y + \sqrt{2}C_3} < \infty.
\end{split}
\end{equation}
and it follows that \( \lim_{t \to T_0} M_1(t) = -\infty \). Thus Theorem \ref{T1.3} is proven. \end{proof}

\subsection{The case of $g$ satisfies \eqref{cofg}}
 If the function \(g\) satisfies the polynomial growth condition \eqref{cofg}, these arguments for Theorem \ref{T1.2} and Theorem \ref{T1.3} are still valid.

In fact, as demonstrated in the proof of Theorem \ref{T1.1}, when \(s \geq 2\), for any \(t \in [0, T]\), we have \[\|u\|_{X^s(\mathbb{R}^2)} \leq 2\|u_0\|_{X^s(\mathbb{R}^2)}.\] When $s \geq 2$, the embedding $X^s(\mathbb{R}^2)  \hookrightarrow L^{\infty}(\mathbb{R}^2)$ implies $$\|u\|_{L^\infty(\mathbb{R}^2)} \leq C\|u\|_{X^s(\mathbb{R}^2)} \leq C\|u_0\|_{X^s(\mathbb{R}^2)}$$ then
\begin{equation}\label{eq:122}
|g'(u)| \leq C_1 |u|^{\alpha}+C_2 \leq C_1 \|u\|_{L^{\infty}(\mathbb{R}^2)}^{\alpha}+C_2 \leq C_1' \|u_0\|_{X^s(\mathbb{R}^2)}^{\alpha}+C_2\le C,
\end{equation}
where the constant $C$ is independent of $u$. This shows that $g^\prime\in L^\infty,$ therefore, both Theorem \ref{T1.2} and Theorem \ref{T1.3}  hold.

\section{Liouville-type Theorem}\label{S4}

 The literature \cite{GLL2021} proves the Liouville-type property for a specific quadratic nonlinear term \ $g(u) = 3u^2$. In this section, we generalize this result to arbitrary smooth nonlinear terms satisfying the conditions  $g(u) \geq \gamma u^2$, and $g(u) > \gamma u^2$, for $u\neq0$ as follows.
\begin{thm}\label{T1.4} Let $u = u(t,x,y)$ be a non-trivial solution to the system \eqref{eq:3} in the space $C([0,T];X^s(\mathbb{R}^2)) \cap C^1([0,T];X^{s-2}(\mathbb{R}^2)), s \geq 2$, and assume $g$ satisfies
\begin{align}\label{eq:10}
g(u) \geq \gamma u^2,
\end{align}
and that $g(u)>\gamma u^2$ for all $u \neq 0$. Then there exists no open set $W \subseteq [0,T] \times \mathbb{R}$ such that  $$u(t,x,y) = 0, \forall (t,x) \in W,\ y \in \mathbb{R}.$$
\end{thm}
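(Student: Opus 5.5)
Argue by contradiction, following the unique-continuation argument of \cite{LP2020} for the CH equation and its CH-KP adaptation in \cite{GLL2021}. Suppose there is a nonempty open set $W\subseteq[0,T]\times\mathbb{R}$ with $u(t,x,y)=0$ for all $(t,x)\in W$ and all $y\in\mathbb{R}$. Shrinking $W$, we may take $W=(t_1,t_2)\times(a,b)$. On $W$ all $x$- and $t$-derivatives of $u$ vanish identically, for every $y$. In particular $u=u_x=u_{xx}=u_t=u_{tx}=0$ there. By $u_y=v_x$ and the decay in $X^s$, $v$ depends only on $(t,y)$ on $W$, so $v_x=0$ and $v_{xx}=0$ there as well.

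We differentiate the first equation of \eqref{eq:3} once in $x$. Using $\partial_x^2 G*f=G*f-f$, we get
\[
u_{tx}+\gamma u_x^2+\gamma uu_{xx}+\tfrac12 g(u)+\tfrac\gamma2u_x^2-\tfrac\gamma2u^2-G*\Big(\tfrac12 g(u)+\tfrac\gamma2 u_x^2-\tfrac\gamma2 u^2\Big)+\partial_x G*v_y=0.
\]
For each fixed $t\in(t_1,t_2)$, restricting to $x\in(a,b)$ kills every local term. What remains is
\[
G*F(t,x,y)=\partial_xG*v_y(t,x,y),\qquad F:=\tfrac12\big(g(u)-\gamma u^2\big)+\tfrac\gamma2u_x^2 \quad\text{for } x\in(a,b),\ y\in\mathbb{R}.
\]
The nonlocal $v_y$ term has to be removed. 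We integrate in $y$ over $\mathbb{R}$. Since $v_y=\partial_x^{-1}u_{yy}$ and $u\in X^s$, we have $\int_{\mathbb{R}}v_y\,\mathrm{d}y=0$, justified on the Fourier side at $\eta=0$ using $\partial_x^{-1}u\in H^s$. This leaves
\[
\int_{\mathbb{R}}\int_{\mathbb{R}}G(x-\xi)F(t,\xi,y)\,\mathrm{d}\xi\,\mathrm{d}y=0,\qquad x\in(a,b).
\]
By \eqref{eq:10}, $F\ge0$ everywhere, and $G>0$. Hence $F(t,\xi,y)=0$ for a.e.\ $(\xi,y)$. Using $g(u)>\gamma u^2$ for $u\ne0$, this gives $u(t,\cdot,\cdot)\equiv0$ for every $t\in(t_1,t_2)$.

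Finally we propagate in time. The solution is unique in $C([0,T];X^s)$ by the local theory used in Theorem \ref{T1.1}, and the zero function is a solution. Solving forward and backward from some $t\in(t_1,t_2)$ (the system is time-reversible, $t\mapsto -t$, $x\mapsto -x$ up to the sign structure; otherwise use the energy identity \eqref{eq:27}, which gives $\|u(t)\|_{L^2}^2+\|u_x(t)\|_{L^2}^2$ constant) then forces $u\equiv0$ on $[0,T]$. This contradicts non-triviality. The energy conservation route is the cleanest: $E(u(t))=0$ at one time implies $u\equiv0$ for all $t$.

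The main obstacle is the nonlocal term $\partial_x G*v_y$. The integration in $y$ must be justified rigorously: one needs integrability of $G*F$ and of $\partial_xG*v_y$ in $y$, and vanishing of the $y$-mean of $v_y=\partial_x^{-1}u_{yy}$, which needs some decay or Fubini care. I would handle this via Plancherel and the $X^s$ regularity, approximating by smooth, rapidly decaying data if necessary.
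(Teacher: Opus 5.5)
Your proof is correct. It follows the paper's strategy (integrate in $y$ to remove the transverse term, then use $F\ge0$ and the sign structure of the Green's kernel), with two local differences worth comparing.

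\textbf{Kernel step.} The paper integrates the undifferentiated equation in $y$ and obtains $\int_{\mathbb{R}}p(x,y)\,\mathrm{d}y=0$ on $[c,d]$, where $p=G_x*F(t^*)$. It then uses the strict monotonicity \eqref{eq:118} of the sign-changing kernel $G_x$ outside $[c,d]$ to get $p(d,y)\ge p(c,y)$, with equality only if $F(t^*,\cdot,y)\equiv0$. Your extra $x$-derivative is the infinitesimal form of that comparison. Since $\partial_x p=G*F-F$ and $F=0$ on $[c,d]$, one has $p(d,y)-p(c,y)=\int_c^d (G*F)(t^*,x,y)\,\mathrm{d}x$. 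Working with the positive kernel $G$ at one interior point is slightly more direct.

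\textbf{Time step.} You propagate in time with the conservation law \eqref{eq:27}, while the paper appeals to uniqueness. Since $t^*$ may lie in the interior of $[0,T]$, the paper's step tacitly needs uniqueness backward in time. This is available, e.g., from the reversibility $(t,x)\mapsto(-t,-x)$ you mention. Conservation of $\|u\|_{L^2}^2+\|u_x\|_{L^2}^2$ handles both time directions at once without that.

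\textbf{Sign slip.} Differentiating the equation actually gives $u_{tx}+\gamma u_x^2+\gamma uu_{xx}-F+G*F+\partial_xG*v_y=0$. You have the signs of $F$ and $G*F$ reversed. This is harmless, because $F=0$ on $W$ and the $v_y$-term drops out after integrating in $y$.

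\textbf{The step you flag as the main obstacle.} The paper passes over this point silently, and it is simpler than a Fourier argument at $\eta=0$. Note that $\partial_xG*v_y=G*\partial_xv_y=G*u_{yy}$. Since $u\in H^2$, Lemma \ref{L2.1} gives $G*u_y,\ \partial_xG*u_y,\ G*u_{yy},\ \partial_xG*u_{yy}\in L^2(\mathbb{R}^2)$. Hence $(G*u_y)(t,x,\cdot)\in H^1(\mathbb{R}_y)$ for every $x$, and it tends to $0$ as $|y|\to\infty$. For $x\in(a,b)$ you then get $\int_{-Y}^{Y}(G*F)\,\mathrm{d}y=-\big[(G*u_y)(t,x,\cdot)\big]_{-Y}^{Y}\to0$. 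Since $G*F\ge0$, monotone convergence yields $\int_{\mathbb{R}}(G*F)\,\mathrm{d}y=0$.
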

\begin{proof}
Using the framework established in \cite{GLL2021}, we prove Theorem \ref{T1.4} by contradiction. Suppose there exists an open set \( W \subseteq [0,T] \times \mathbb{R} \) such that for any \((t,x) \in W\) and \( y \in \mathbb{R} \), we have \( u(t,x,y) = 0 \).

First, integrating the system  \eqref{eq:3} with respect to \( y \) over \( \mathbb{R} \) yields
\begin{equation}\label{eq:112}
\int_{\mathbb{R}} \left( u_t + \gamma u u_x \right) \dd y = -\int_{\mathbb{R}} G*\partial_x \left( \frac{1}{2}g(u) + \frac{\gamma}{2}u_x^2 - \frac{\gamma}{2}u^2 \right) \dd y.
\end{equation}
It follows from the assumption that
\begin{equation}\label{eq:113}
u_t + \gamma u u_x = 0,\quad \forall\ (t,x) \in W,\ y \in \mathbb{R},
\end{equation}
\begin{equation}\label{eq:114}
\frac{1}{2}g(u) + \frac{\gamma}{2}u_x^2 - \frac{\gamma}{2}u^2 = 0,\quad \forall\ (t,x) \in W,\ y \in \mathbb{R}.
\end{equation}
Since \(W\) is an open set, there exists \(t^* \in [0,T],\ I \subseteq [c,d],\ c < d\) such that \(\{t^*\} \times I \subseteq W\). Define
\begin{equation}\label{eq:115}
\begin{split}
p(x,y) &= G_x * \left( \frac{1}{2}g(u) + \frac{\gamma}{2}u_x^2 - \frac{\gamma}{2}u^2 \right)(t^*,x,y) \\
&= -\frac{1}{2} \operatorname{sgn}(\cdot) e^{-|\cdot|} \ast \left( \frac{1}{2}g(u) + \frac{\gamma}{2}u_x^2 - \frac{\gamma}{2}u^2 \right)(t^*, \cdot, y),
\end{split}
\end{equation}
\begin{equation}\label{eq:116}
q(x,y) = \left( \frac{1}{2}g(u) + \frac{\gamma}{2}u_x^2 - \frac{\gamma}{2}u^2 \right)(t^*,x,y).
\end{equation}
The regularity of functions \( u, g \) guarantees the continuity of functions \( p, q \), then
\begin{equation}\label{eq:117}
\int_{\mathbb{R}} p(x,y) \dd y = 0,\quad q(x,y) = 0,\quad \forall\ x \in [c,d].
\end{equation}

From the condition \( g(u) \ge \gamma u^2 \), we obtain \( q(x,y) \ge 0 \). By the condition, for all \( u \neq 0 \), we have \( g(u) > \gamma u^2 \), implying \( q(x,y) = 0 \) if and only if \( u = 0 \). Thus, when \(g(u) \ge \gamma u^2\), the equation \(\frac{1}{2}g(u) + \frac{\gamma}{2}u_x^2 - \frac{\gamma}{2}u^2 = 0\) is equivalent to \(u = 0\) (i.e., it admits only the trivial solution).

For any \( z \notin [c,d] \), we have
\begin{equation}\label{eq:118}
-\operatorname{sgn}(d - z)e^{-|d - z|} > -\operatorname{sgn}(c - z)e^{-|c - z|},
\end{equation}
Then  given \( y \in \mathbb{R}\), we have
\begin{equation}\label{eq:119}
\begin{split}
p(d,y) &= -\frac{1}{2} \int_{-\infty}^{+\infty} \operatorname{sgn}(d - z)e^{-|d - z|} q(z,y) \dd z \\
&\ge -\frac{1}{2} \int_{-\infty}^{+\infty} \operatorname{sgn}(c - z)e^{-|c - z|} q(z,y) \dd z = p(c,y),
\end{split}
\end{equation}
with equality holds if and only if \(\forall\ z \in \mathbb{R},\ q(z,y)=0 \). By the first equality of \eqref{eq:117}, it follows \[ \int_{\mathbb{R}} \left( p(d,y)-p(c,y) \right) dy = 0. \] Furthermore, since \( p(d,y)-p(c,y) \ge 0 \), we conclude that \[ p(d,y)-p(c,y)=0, \forall y\in \mathbb{R},\] hence \( \forall\ (y, z) \in \mathbb{R}^2,\ q(z,y)=0\), i.e.
\begin{equation}\label{eq:120}
\left( \frac{1}{2}g(u) + \frac{\gamma}{2}u_x^2 - \frac{\gamma}{2}u^2 \right)(t^*,z,y) = 0,\quad \forall\ (y, z) \in \mathbb{R}^2.
\end{equation}
This implies that \(\ u(t^*,z,y)=0,
\forall\ (y, z) \in \mathbb{R}^2.\)

By the local well-posedness and uniqueness (see \cite{WKZ2022}), solutions that agree at some time must coincide for all times. Hence $ u(t,x,y) \equiv 0$ for all $t\in[0,T]$ and $x,y\in \mathbb{R}$, contradicting the assumption that $u$ is non-trivial. This completes the proof.
 \end{proof}
In short, under the conditions $g(u) \geq \gamma u^2$ and $g(u) > \gamma u^2$, for $u\neq0$, no non-trivial solution can vanish identically on any nonempty open space-time cylinder. In other words, the solutions enjoy Liouville-type property.

\section*{Acknowledgments}  X. Ke is supported by the Youth Innovation Exploration Fund (NSFRF2502086) under the Special Project for Basic Scientific Research Business Expenses (Natural Sciences) at Henan Polytechnic University, J. Wang is supported by the Scientific and Technological Research Project (GJJ2409301) of the Jiangxi Provincial Department of Education, A. Zang is supported by the National Natural Science Foundation of China (Nos. 12261093), by  Jiangxi Provincial Natural Science Foundation  (No. 20224ACB201004).
\section*{Data availability}
\quad No data was used for the research described in the article.

\end{document}